\documentclass[11pt,a4paper]{article}
\usepackage[T1]{fontenc}
\usepackage{graphicx}
\usepackage{mathtools}
\usepackage{amssymb}
\usepackage{amsthm}
\usepackage{thmtools}
\usepackage{microtype}
\usepackage{ragged2e}
\justifying 
\usepackage{xcolor}
\usepackage{nameref}
\usepackage{hyperref}
\hypersetup{
	colorlinks=true,
	linkcolor=blue,
	citecolor=blue,
	filecolor=magenta,      
	urlcolor=cyan,
}
\usepackage{algorithm}
\usepackage{algorithmicx}
\usepackage{algpseudocode} 
\bibliographystyle{plain}
\usepackage{setspace}
\usepackage{amsfonts}
\title{Augmented Lagrange method for optimal control problems of parabolic equation with state constraints}
\author{Weilong You \thanks{College of Science, University of Shanghai for Science and Technology,
		Shanghai 200093, China (email: {\tt 1127880510@qq.com}).}
	\and Fu~Zhang\thanks{Corresponding author. College of Science, University of Shanghai for Science and Technology,
		Shanghai 200093, China (email: {\tt fuzhang82@gmail.com}).
		F.~Zhang was partially supported 
		by the National Natural Science Foundation of China (No.  12071292).}}
\date{\today}
\allowdisplaybreaks

\newtheorem{theorem}{Theorem}[section] 
\newtheorem{lemma}[theorem]{Lemma}     
\newtheorem{definition}{Definition}
\newtheorem{assumption}{Assumption}
\begin{document}
	\maketitle
	\textbf{Abstract} The augmented Lagrange method is employed to address the optimal control problem involving pointwise state constraints in parabolic equations. The strong convergence of the primal variables and the weak convergence of the dual variables are rigorously established. The sub-problems arising in the algorithm are solved using the Method of Successive Approximations (MSA), derived from Pontryagin's principle. Numerical experiments are provided to validate the convergence of the proposed algorithm.\\
	\\
		\textbf{Keywords} Optimal control $\cdot$ Parabolic equation $\cdot$ State constrains $\cdot$ Augmented Lagrange method\\
		\\
		\\
		\\
	\section{Introduction}
    In this paper, the pointwise control problem of parabolic equation with state constraints of\\
    \begin{equation}
    	\begin{aligned}
    \min J(y,u,v):=\frac{1}{2}\parallel y(\cdot,T)-y_d\parallel_{L^2(\Omega)}^2+\frac{\alpha}{2}\parallel u\parallel_{L^2(\Omega_T)}^2+\frac{\beta}{2}\parallel v\parallel_{L^2(\Sigma_T)}^2,
        \end{aligned}
     \label{P}
     \tag{P}
     \end{equation}
    subject to\\
    \begin{align*}
    y_t+Ay&=u \;\; \rm{in} \; \Omega_T,\\
    \partial_{\nu_A}y&=v \;\;  \rm{on} \; \Sigma_T,\\
    y(\cdot,0)&=y_0 \;  \rm{on} \; \overline{\Omega}\\
    y&\leq \psi \;\;  \rm{in} \; \Omega_T,\\
      u_a\leq u&\leq u_b \;\;   \rm{a.e. \; in} \; \Omega_T,\\
    v_a\leq v&\leq v_b \;\;  \rm{a.e. \; on} \; \Sigma_T.\\
        \end{align*}
    Where $A$ is a second-order elliptic operator, the setting of the problem will be described in Sub-section 2.1.\\
    In above optimal control problem, the Lagrange multiplier associated with the inequality constraint on the state belongs to Radon measure space \( \mathcal{M}(\overline{\Omega}_T) \) the dual space of \( C(\overline{\Omega}_T) \). As a result, the regularity of the Lagrange multiplier is relatively low, which can pose challenges when solving the optimal control problem using numerical methods. Approaches to address this issue are presented in several pieces of literature. The first is Moreau-Yosida regularization, proposed in \cite{Ito2000A}, which is commonly applied to optimal control problems with elliptic state constraints, as discussed in greater detail in \cite{Bergounioux2000A, Hinterm2006P, Ito2003S, Kunisch2002P}. The second approach is Lavrentiev regularization, which is primarily used for elliptic state constraints \cite{Meyer2007O,Meyer2006O,Tr2004R,Tr2009A} but has also been mentioned in the context of optimal control problems with parabolic state constraints \cite{Neitzel2009O,Neitzel2012N}. Additionally, interior-point methods have also been employed to solve such problems. These methods are detailed in \cite{Hinze2011D, Kruse2015A, Pearson2017F, Schiela2009B, Schiela2013A, Schiela2011A} for elliptic state constraints and in \cite{Benedix2011A,Pearson2017F,Pr2007A} for parabolic state constraints.
    
    Despite their utility, these methods have certain limitations. Moreau-Yosida regularization introduces a smoothing term to address the nonsmoothness of the state constraints, but it increases problem complexity and can result in high computational costs, especially for high-dimensional problems. The choice of the regularization parameter also heavily influences solution quality and convergence speed. Similarly, Lavrentiev regularization modifies the constraints to be smoother, but this can lead to reduced solution accuracy and thus requires careful tuning of the regularization parameter. Furthermore, the method may be cumbersome to implement and is not suitable for all types of state constraints. Interior-point methods, while effective for smooth constraints, may struggle with low-regularity Lagrange multipliers, especially in cases involving discontinuous or nonsmooth constraints, which can affect their convergence and performance.
    
    In this paper, we propose the use of the \textbf{augmented Lagrange method} to solve the problem \( \eqref{P} \). The augmented Lagrange multiplier method is well-established in finite-dimensional optimization problems. In \cite{B2019L}, the author introduced an augmented Lagrange method for constrained optimization problems in Banach spaces, presenting an example of a simple elliptic state-constrained optimal control problem. In \cite{Karl2018A}, the author provided a detailed analysis of the augmented Lagrange method for elliptic state-constrained optimal control problems, including a specific convergence analysis. We adapt the method for parabolic state constraints and analyze its convergence. To solve the augmented Lagrange sub-problem, we use the \textbf{method of successive approximate (MSA)} \cite{Li2017M,Chernous1982M}, based on Pontryagin's principle, with the Hamiltonian function detailed in \cite{Casas1997P,Raymond1999H}.
    The structure of this paper is as follows: In Section 2, we present the preliminary results related to the original problem \( \eqref{P} \). In Section 3, we discuss the application of the augmented Lagrange method to this problem. Subsection 3.1 provides a brief introduction to the algorithm. In Subsection 3.2, we present the multiplier update rule, noting that the multiplier is updated only when the index decreases, and the required index is given in Subsection 3.3. In Section 3.4, we introduce the augmented Lagrange algorithm in detail and prove that it terminates without running indefinitely, using the index defined earlier as the stopping condition. Subsection 3.5 provides the convergence proof of the algorithm. Section 4 presents the MSA algorithm for solving the augmented Lagrange sub-problem. Finally, Section 5 presents a numerical example to illustrate the method.\\
    \textbf{Notation} Throughout this paper, \( (\cdot, \cdot)_{\Omega} \) denotes the inner product in \( L^2(\Omega) \), \( (\cdot, \cdot)_{\Sigma} \) denotes the inner product in \( L^2(\Sigma) \), while \( \langle \cdot, \cdot \rangle \) represents the duality pairing between \( \mathcal{M}(\overline{\Omega}_T) \) and \( C(\overline{\Omega}_T) \). Here, \( \mathcal{M}(\overline{\Omega}_T) \) refers to the dual space of \( C(\overline{\Omega}_T) \), which corresponds to the space of Radon measures on \( \overline{\Omega}_T \).
	\section{Preliminary results}
	\subsection{Setting of the control problem}
	Let $\Omega\subset \mathbb{R}^N$, $(N\in\{2,3\})$ be open and bounded with $C^{1,1}-$boundary $\Gamma$. Given $0<T<+\infty$, we set $\Omega_T:=\Omega \times (0,T)$ and $\Sigma_T:=\Gamma \times (0,T)$. Let $\mathcal{Y}$ denote the space $\mathcal{Y}:=\mathcal{W}(0,T;L^2(\Omega),H^1(\Omega)) \cap C(\overline{\Omega}_T)$, where $\mathcal{W}(0,T;L^2(\Omega),H^1(\Omega))$ denote the Hilbert space $\{y : y \in L^2(0,T;H^1(\Omega)) ,y_t \in L^2(0,T;H^{-1}(\Omega))\}$ equipped with the norm 
	$$\parallel y\parallel_{W(0,T;L^2(\Omega),H^1(\Omega))}:=\sqrt{\parallel y\parallel_{L^2(0,T;H^1(\Omega))}^2+\parallel y_t\parallel_{L^2(0,T;H^{-1}(\Omega))}^2}$$
	Moreover we set $\mathcal{U}:=L^r(\Omega_T),\;\mathcal{V}:=L^s(\Sigma_T)$ with $r > N/2+1, \; s > N+1$. We want to solve the following state-constrained optimal control problem: Minimize\\
	$$J(y,u,v):=\frac{1}{2}\parallel y(\cdot,T)-y_d\parallel_{L^2(\Omega)}^2+\frac{\alpha}{2}\parallel u\parallel_{L^2(\Omega_T)}^2+\frac{\beta}{2}\parallel v\parallel_{L^2(\Sigma_T)}^2$$\\
	over all $(y,u,v)\in \mathcal{Y} \times \mathcal{U} \times \mathcal{V}$ subject to the parabolic equation\\
	\begin{equation}
	\begin{aligned}
	(y_t+Ay)(x,t)&=u(x,t) \; \rm{in} \; \Omega_T,\\
	(\partial_{\nu_A}y)(x,t)&=v(x,t) \; \rm{on} \; \Sigma_T,\\
	y(\cdot,0)&=y_0 \;\;\;\;\;\;\;\; \rm{on} \; \overline{\Omega},\
\end{aligned}
\label{B}
\end{equation}
	and subject to the pointwise state and control constraints\\
	\begin{equation}
		\begin{aligned}
	y(x,t)&\leq \psi(x,t) \; \rm{in} \; \Omega_T,\\
	u_a(x,t)\leq u(x,t)&\leq u_b(x,t) \;\rm{a.e. \; in} \; \Omega_T,\\
	v_a(x,t)\leq v(x,t)&\leq v_b(x,t) \; \rm{a.e. \; on} \; \Sigma_T.\\
\end{aligned}
\label{A}
	\end{equation}
We say that $(\bar{y},\bar{u},\bar{v})$ is a solution to \eqref{P}, if the triplet \( (\bar{y},\bar{u},\bar{v}) \) satisfies the inequality constraints \eqref{A}, where \( \bar{y} \) is the weak solution to the associated parabolic equation \eqref{B}, and the functional \( J(\bar{y},\bar{u},\bar{v}) \) achieves the minimum, i.e.,  
\[
J(\bar{y}, \bar{u}, \bar{v}) \leq J(y, u, v),
\]
for any $(y, u, v) \in \mathcal{Y} \times \mathcal{U} \times \mathcal{V}, \; 
\text{where } (y, u, v) \text{ satisfies } \eqref{A}, \text{ and } y \text{ satisfies } \eqref{B}.$

 The precise definition of a weak solution to the parabolic equation is provided below.
 	\begin{definition}
 	A function $y\in \mathcal{W}(0,T;L^2(\Omega),H^1(\Omega))$ is called a weak solution of the state equation if it holds\\
 	\[
 	\begin{aligned}
 		&\int_0^T\int_\Omega y_t(x,t)w(x,t)dxdt 
 		- \int_0^T\int_\Sigma v(x,t)w(x,t)dSdt \\
 		&\quad + \int_0^T\int_\Omega \sum_{i,j=1}^{N} a_{ij}(x)\partial_{x_i}y(x,t)\partial_{x_j}w(x,t)dxdt \\
 		&= \int_0^T\int_\Omega u(x,t)w(x,t)dxdt,
 		\quad \forall w \in L^2(0,T;H^1(\bar{\Omega})).
 	\end{aligned}
 	\]
 \end{definition}
	In the sequel, we will work with the following set of standing assumptions.
	\begin{assumption}
    1) The given data satisfy $y_d \in L^2(\Omega)$, $\alpha >0$, $\beta>0$, $u_a,u_b \in L^r(\Omega_T)$, $v_a,v_b \in L^s(\Sigma_T)$ with $u_a<u_b$, $\parallel u_a \parallel_{L^r(\Omega_T)}<\infty$, $\parallel u_b \parallel_{L^r(\Omega_T)}<\infty$, $v_a<v_b$, $\parallel v_a \parallel_{L^s(\Sigma_T)}<\infty$, $\parallel v_b \parallel_{L^r(\Sigma_T)}<\infty$, $\psi \in  C(\overline{\Omega}_T)$.\\
	2) The differential operator A is given by \\
	$$Ay(x,t)=-\sum_{i,j=1}^{N}\partial_{x_j}(a_{ij}(x)\partial_{x_i}y(x,t)),$$
	with $a_{ij}(x) \in L^\infty(\Omega)$, $a_{ij}(x)=a_{ji}(x)$. The operator $A$ is assumed to be uniformly elliptic, i.e., there is a positive constant $\theta$ such that \\
	$$\sum_{i,j=1}^{N}a_{ij}(x)\xi _i\xi _i \geq \theta\parallel\xi\parallel^2$$ for almost all $x \in \Omega$ and all $\xi \in R^N$.\\
	3) The co-normal derivative $\partial_{\nu_A}y$ is given by\\
	$$\partial_{\nu_A}y=\sum_{i,j=1}^{N}a_{ij}(x)\partial_{x_i}y(x,t)\nu_j(x),$$
	where $\nu_A=(\nu_1,\dots,\nu_N)$ denotes the outward unit normal vector on $\Gamma$.
		\end{assumption}
	The existence lemma for the weak solution of the parabolic equation follows from  Assumption 1.
	\begin{lemma}
	\textbf{(Existence of weak solution)}When the above assumptions fulfilled, for every $u \in L^r(\Omega_T),\; v \in L^s(\Sigma_T)$, there exists a weak solution $y \in W(0,T;L^2(\Omega),H^1(\Omega))$ of the state equation.\cite{Evans2010P}
\end{lemma}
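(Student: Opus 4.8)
The plan is to establish existence by the Faedo--Galerkin method, recasting the weak formulation of \eqref{B} as an abstract linear parabolic evolution equation in the Gelfand triple $H^1(\Omega) \hookrightarrow L^2(\Omega) \hookrightarrow H^1(\Omega)^{*}$. First I would introduce the time-independent bilinear form
\[
a(y,w) := \int_\Omega \sum_{i,j=1}^N a_{ij}(x)\,\partial_{x_i} y\,\partial_{x_j} w \, dx,
\]
and check the two structural properties that drive the whole argument: boundedness, $|a(y,w)| \le C\|y\|_{H^1(\Omega)}\|w\|_{H^1(\Omega)}$, which follows from $a_{ij}\in L^\infty(\Omega)$; and G\aa rding's inequality $a(y,y) \ge \theta\|\nabla y\|_{L^2(\Omega)}^2 = \theta\|y\|_{H^1(\Omega)}^2 - \theta\|y\|_{L^2(\Omega)}^2$, which follows from the uniform ellipticity in Assumption 1. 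Since $A$ carries a co-normal (Neumann-type) boundary condition, the form is only coercive up to the lower-order $L^2$ term; this shift is harmless and will be absorbed later through Gronwall's inequality.

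Next I would verify that the data define, for a.e. $t\in(0,T)$, a bounded linear functional $F(t)\in H^1(\Omega)^{*}$ via
\[
\langle F(t),w\rangle := \int_\Omega u(\cdot,t)\,w\,dx + \int_\Gamma v(\cdot,t)\,w\,dS.
\]
The volume term is controlled by $\|u(\cdot,t)\|_{L^2(\Omega)}$, and the boundary term by the continuity of the trace operator $H^1(\Omega)\to L^2(\Gamma)$ together with $\|v(\cdot,t)\|_{L^2(\Gamma)}$. Because $\Omega$ is bounded with $C^{1,1}$ boundary and $r>N/2+1$, $s>N+1$, the embeddings $L^r(\Omega_T)\hookrightarrow L^2(\Omega_T)$ and $L^s(\Sigma_T)\hookrightarrow L^2(\Sigma_T)$ give $u\in L^2(\Omega_T)$ and $v\in L^2(\Sigma_T)$, so $F\in L^2(0,T;H^1(\Omega)^{*})$, which is exactly the regularity needed for the $W(0,T)$ theory.

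Then I would run the Galerkin scheme: fix a basis $\{w_k\}_{k\ge1}$ of $H^1(\Omega)$, orthonormal in $L^2(\Omega)$, seek $y_m(t)=\sum_{k=1}^m d_k^m(t)\,w_k$ solving the projected finite-dimensional system, and invoke linear ODE theory for existence of $y_m$ on $[0,T]$. Testing with $y_m$ and using G\aa rding's inequality, Young's inequality, and Gronwall's inequality yields the uniform bounds $\|y_m\|_{L^\infty(0,T;L^2(\Omega))}+\|y_m\|_{L^2(0,T;H^1(\Omega))}\le C$ and, by comparison in the equation, $\|y_m'\|_{L^2(0,T;H^1(\Omega)^{*})}\le C$, with $C$ independent of $m$. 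Weak (respectively weak-$*$) compactness then extracts a subsequence converging to some $y\in W(0,T;L^2(\Omega),H^1(\Omega))$; passing to the limit in the projected identity (linearity makes this routine) shows that $y$ satisfies the weak formulation of the Definition. Finally I would recover the initial condition $y(\cdot,0)=y_0$ using the embedding $W(0,T;L^2(\Omega),H^1(\Omega))\hookrightarrow C([0,T];L^2(\Omega))$ and integration by parts in $t$ against test functions with $w(\cdot,T)=0$.

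The step I expect to be the main obstacle is the correct treatment of the boundary term throughout: ensuring the trace estimate is uniform across the Galerkin levels and that the limit passage in $\int_\Gamma v\,w\,dS$ is justified. Everything else is the textbook linear parabolic argument as in the cited \cite{Evans2010P}; the only genuinely problem-specific ingredient is that the Neumann data enters the functional $F$ rather than the bilinear form, so coercivity is only of G\aa rding type and must be compensated via the exponential/Gronwall shift rather than assumed outright.
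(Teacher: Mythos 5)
Your proposal is correct and is essentially the paper's own argument: the paper proves this lemma purely by citation to \cite{Evans2010P}, and the Faedo--Galerkin construction you lay out (Gelfand triple, G\aa rding-type coercivity absorbed by Gronwall, energy estimates, weak compactness, limit passage) is exactly the standard proof in that reference, correctly adapted to the Neumann setting by placing the boundary datum $v$ in the right-hand-side functional via the trace theorem. The one point you flag as the ``main obstacle'' is in fact harmless --- the boundary term $\int_\Gamma v\,w\,dS$ involves only the fixed data and the test function, not the Galerkin iterates $y_m$, so no uniformity or limit passage issue arises there.
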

\begin{theorem}
   Let $\Omega \subset \mathbb{R}^n$ be open and bounded with $C^{1,1}$-boundary and let Assumption 1 be satisfied. Then, for every $f \in L^r(\Omega_T)$, $g \in L^s(\Sigma_T)$ and $y_0 \in C(\overline{\Omega})$ with $r > n/2+1, \; s > n+1$, the parabolic partial differential equation\\   
\begin{equation}
	\begin{aligned}
		y_t + Ay &= f, & \rm{in} \; \Omega_T, \\
		\partial_{\nu_A} y &= g, & \rm{on} \; \Sigma_T, \\
		y(\cdot, 0) &= y_0, & \rm{on} \;\;\; \overline{\Omega},
	\end{aligned}
	\label{1}
\end{equation}
     admits a unique weak solution $y \in W(0,T; L^2(\Omega), H^1(\Omega)) \cap C(\overline{\Omega}_T)$, and there exists a constant $C_\infty > 0$ such that
 \begin{equation}
    \parallel y \parallel_{W(0,T; L^2, H^1)} + \parallel y \parallel_{C(\overline{\Omega}_T)} \leq C_\infty \left( \parallel f \parallel_{L^r(\Omega_T)} +\parallel g \parallel_{L^s(\Sigma_T)} +\parallel y_0 \parallel_{C(\overline{\Omega})} \right),
   	\label{2}
 \end{equation}
    where $C_\infty$ does not depend on $f$, $g$, or $y_0$.\\
    \end{theorem}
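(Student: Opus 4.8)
The plan is to decompose the statement into three parts and to exploit linearity throughout: (i) existence and uniqueness together with the energy bound in $W(0,T;L^2,H^1)$; (ii) an $L^\infty(\Omega_T)$ a priori bound; and (iii) continuity up to $\overline{\Omega}_T$. Existence in the energy space is already supplied by the preceding existence lemma, after the embeddings $L^r(\Omega_T)\hookrightarrow L^2(\Omega_T)$, $L^s(\Sigma_T)\hookrightarrow L^2(\Sigma_T)$, and $C(\overline{\Omega})\hookrightarrow L^2(\Omega)$, which are legitimate because $\Omega_T$ and $\Sigma_T$ are bounded. Uniqueness follows from linearity: the difference of two weak solutions solves the homogeneous problem, and testing with the solution itself, using the uniform ellipticity constant $\theta$ from Assumption~1 together with Gr\"onwall's inequality, forces it to vanish. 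The same test-function computation yields the energy estimate $\|y\|_{W(0,T;L^2,H^1)}\le C\big(\|f\|_{L^2(\Omega_T)}+\|g\|_{L^2(\Sigma_T)}+\|y_0\|_{L^2(\Omega)}\big)$, which I then majorize by the stronger norms $\|f\|_{L^r}$, $\|g\|_{L^s}$, $\|y_0\|_{C(\overline{\Omega})}$ appearing in \eqref{2}.

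The core of the theorem is the $L^\infty$ bound, which I would obtain by the Stampacchia/De~Giorgi truncation method. For a level $k\ge\|y_0\|_{C(\overline{\Omega})}$, I test the weak formulation with $(y-k)^+$ to get, for a.e.\ $t$,
\[
\tfrac12\,\tfrac{d}{dt}\,\|(y-k)^+(\cdot,t)\|_{L^2(\Omega)}^2+\theta\,\|\nabla (y-k)^+\|_{L^2(\Omega)}^2\le \int_{\Omega} f\,(y-k)^+\,dx+\int_{\Gamma} g\,(y-k)^+\,dS .
\]
Integrating in time and using the parabolic embedding $L^\infty(0,T;L^2(\Omega))\cap L^2(0,T;H^1(\Omega))\hookrightarrow L^{2(n+2)/n}(\Omega_T)$ for the distributed term, together with the corresponding parabolic trace embedding on $\Sigma_T$ for the boundary term, I bound the right-hand side by powers of the measure of the super-level sets $\{y>k\}$. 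This produces a nonlinear recursive inequality for a sequence $k_j\uparrow M$; the conditions $r>n/2+1=(n+2)/2$ for $f$ and $s>n+1$ for $g$ are exactly the thresholds that make the resulting iteration exponent strictly larger than one, so the standard fast-geometric-convergence lemma forces $(y-M)^+\equiv 0$ with $M\le\|y_0\|_{C(\overline{\Omega})}+C(\|f\|_{L^r}+\|g\|_{L^s})$. Applying the argument to $-y$ gives the two-sided bound $\|y\|_{L^\infty(\Omega_T)}\le C(\|f\|_{L^r}+\|g\|_{L^s}+\|y_0\|_{C(\overline{\Omega})})$.

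Finally, for continuity: with $y$ now known to be bounded, the equation has bounded measurable coefficients and an $L^\infty$ source, so the De~Giorgi--Nash--Moser theory for parabolic equations, valid up to the boundary because $\Gamma$ is $C^{1,1}$ and the boundary condition is of conormal (Neumann) type, yields a uniform interior-and-boundary H\"older estimate $y\in C^{\gamma,\gamma/2}(\overline{\Omega}\times[\tau,T])$ for every $\tau>0$ and some $\gamma\in(0,1)$. Continuity down to $t=0$ is then obtained from $y_0\in C(\overline{\Omega})$ by a barrier/modulus-of-continuity argument near the initial slice, giving $y\in C(\overline{\Omega}_T)$ with $\|y\|_{C(\overline{\Omega}_T)}$ controlled by the same data norms; adding this to the energy bound yields \eqref{2}. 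I expect the hardest step to be the $L^\infty$ estimate: carrying the De~Giorgi iteration with both a distributed source and a boundary source while keeping the sharp exponents $r>n/2+1$ and $s>n+1$ — equivalently, verifying the trace-embedding constants and that the iteration exponent exceeds one — is the delicate point, and it is precisely here that the hypotheses on $r$ and $s$ are used in an essential way.
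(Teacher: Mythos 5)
The paper does not actually prove this theorem: its entire proof is the citation ``Theorem 1.40 on page 49 of Hinze et al.'', so any genuine argument is necessarily a different route from what appears in the paper. What you propose is, in substance, the classical Ladyzhenskaya--Solonnikov--Uraltseva/Stampacchia argument that underlies that cited result: energy estimates for existence, uniqueness and the $W(0,T;L^2,H^1)$ bound; De Giorgi truncation with the parabolic embedding $L^\infty(0,T;L^2)\cap L^2(0,T;H^1)\hookrightarrow L^{2(n+2)/n}(\Omega_T)$ and its trace analogue on $\Sigma_T$ for the $L^\infty$ bound, with $r>n/2+1$ and $s>n+1$ entering exactly as the thresholds that make the level-set iteration close; and De Giorgi--Nash--Moser regularity plus an initial-layer argument for continuity. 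The decomposition, the identification of where each exponent hypothesis is used, and the role of the conormal boundary condition are all correct, so your proposal buys a self-contained proof where the paper buys brevity by deferring to the literature.

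Two points need correction or more care. First, in the continuity step you assert that after the $L^\infty$ bound ``the equation has \dots\ an $L^\infty$ source''; it does not --- $f$ is still only in $L^r(\Omega_T)$ and $g$ only in $L^s(\Sigma_T)$. What saves the step is that parabolic De Giorgi--Nash--Moser theory yields H\"older continuity, in the interior and up to a $C^{1,1}$ conormal boundary, precisely for sources in $L^r$ with $r>(n+2)/2$ and boundary fluxes in $L^s$ with $s>n+1$; invoke it in that form rather than via a false reduction to bounded right-hand sides. Second, continuity down to $t=0$ from merely continuous (not H\"older) initial data via barriers is doable but delicate; a cleaner alternative is to first establish the a priori estimate \eqref{2}, then approximate $(f,g,y_0)$ by smooth data, for which solutions are continuous on $\overline{\Omega}_T$, and use linearity together with the $L^\infty$ stability estimate applied to differences to get uniform convergence, hence continuity of the limit. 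Neither issue is a structural gap; the skeleton of your proof is sound and is the standard one behind the paper's citation.
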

    \begin{proof}
    This proof is due to Theorem 1.40 on page 49 of \cite{Hinze2009O}.
       \end{proof}
	From this theorem we know that, if $f_n \to f$ in $L^r(\Omega_T)$ and $g_n \to g$ in $L^s(\Sigma_T)$, then the solution $y_n$ of  \eqref {1} with data  $(f_n,g_n)$ converges strongly in ${\mathcal{W}(0,T;L^2(\Omega),H^1(\Omega))}$ and $C(\overline{\Omega}_T)$ to the solution $y$ of \eqref {1} to the data $(f,g)$. It is not hard to prove that the control-to-state mapping $\mathcal{S}:(u,v) \mapsto y$ from $L^r(\Omega_T) \times L^s(\Sigma_T)$ to $\mathcal{W}(0,T;L^2(\Omega),H^1(\Omega)) \cap C(\overline{\Omega}_T)$ is continuous.\\
	Now we represent state and control constraints in terms of feasible sets,
	\[
	\begin{aligned}
	\mathcal{U}_{ad}&=\{u \in L^r(\Omega_T) | u_a(x,t)\leq u(x,t)\leq u_b(x,t) \; \rm{a.e. \; in} \; \Omega_T\},\\
	\mathcal{V}_{ad}&=\{v \in L^s(\Sigma_T) | v_a(x,t)\leq v(x,t)\leq v_b(x,t) \; \rm{a.e. \; on} \; \Sigma_T\},\\
	\mathcal{Y}_{ad}&=\{y \in C(\overline{\Omega}_T) | y(x,t)\leq \psi(x,t) \; \rm{in} \; \Omega_T \}.
    \end{aligned}
     \]
	The feasible set of the optimal control problem is denoted by
	$$\mathcal{F}_{ad}=\{(y,u,v) \in \mathcal{Y}\times \mathcal{U}\times \mathcal{V} | (y,u,v)\in \mathcal{Y}_{ad}\times \mathcal{U}_{ad}\times \mathcal{V}_{ad} ,\; y=\mathcal{S}(u,v) \}.$$
	We usually think of $\mathcal{U}_{ad}\subset \mathcal{U}$, $\mathcal{V}_{ad}\subset \mathcal{V}$, $\mathcal{Y}_{ad}\subset \mathcal{Y}$, as non-empty closed convex sets, which makes the following analysis easier.
	\subsection{Existence of solutions}
	Under the above assumptions, the existence and uniqueness of the constrained control problem can be obtained by the following theorem.
	\begin{theorem}
	 Let Assumption 1 hold. Assume that the feasible set $\mathcal{F}_{ad}$ is nonempty. Then problem \eqref {P} has a uniquely optimal solution $(\bar{y},\bar{u},\bar{v})$.
\end{theorem}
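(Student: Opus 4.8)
The plan is to use the direct method of the calculus of variations for existence and strict convexity for uniqueness. Since $J \ge 0$, the infimum $j^\ast := \inf_{\mathcal{F}_{ad}} J$ is finite, and it is attained along some sequence because $\mathcal{F}_{ad}$ is nonempty by hypothesis; so I would fix a minimizing sequence $(y_n,u_n,v_n) \in \mathcal{F}_{ad}$ with $J(y_n,u_n,v_n) \to j^\ast$. The box constraints give $|u_n| \le \max(|u_a|,|u_b|)$ and $|v_n| \le \max(|v_a|,|v_b|)$ pointwise, so $\{u_n\}$ is bounded in $\mathcal{U} = L^r(\Omega_T)$ and $\{v_n\}$ in $\mathcal{V} = L^s(\Sigma_T)$; as $1<r,s<\infty$ these spaces are reflexive, so after extracting a subsequence $u_n \rightharpoonup \bar u$ in $L^r(\Omega_T)$ and $v_n \rightharpoonup \bar v$ in $L^s(\Sigma_T)$. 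Because $\mathcal{U}_{ad}$ and $\mathcal{V}_{ad}$ are convex and closed they are weakly closed, whence $\bar u \in \mathcal{U}_{ad}$ and $\bar v \in \mathcal{V}_{ad}$.

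Since the control-to-state map $\mathcal{S}$ is affine with bounded linear part (by the a priori estimate \eqref{2}), it is weak-to-weak continuous, so $y_n = \mathcal{S}(u_n,v_n) \rightharpoonup \bar y := \mathcal{S}(\bar u,\bar v)$ in $\mathcal{W}(0,T;L^2(\Omega),H^1(\Omega))$. The key point is to verify the state constraint $\bar y \in \mathcal{Y}_{ad}$ in the limit, which is delicate because $\mathcal{Y}_{ad}$ lives in the nonreflexive space $C(\overline{\Omega}_T)$. I would exploit the compact embedding $\mathcal{W}(0,T;L^2(\Omega),H^1(\Omega)) \hookrightarrow\hookrightarrow L^2(\Omega_T)$ (Aubin--Lions, using $H^1(\Omega)\hookrightarrow\hookrightarrow L^2(\Omega)\hookrightarrow H^{-1}(\Omega)$) to pass to a further subsequence with $y_n \to \bar y$ strongly in $L^2(\Omega_T)$, hence a.e.\ in $\Omega_T$; since $y_n \le \psi$ pointwise, the limit satisfies $\bar y \le \psi$ a.e., and the continuity of $\bar y,\psi \in C(\overline{\Omega}_T)$ upgrades this to $\bar y \le \psi$ everywhere, i.e.\ $\bar y \in \mathcal{Y}_{ad}$. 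Thus $(\bar y,\bar u,\bar v) \in \mathcal{F}_{ad}$.

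It remains to show weak lower semicontinuity of $J$. The evaluation $y \mapsto y(\cdot,T)$ is bounded linear from $\mathcal{W}(0,T;L^2(\Omega),H^1(\Omega))$ to $L^2(\Omega)$, so $y_n(\cdot,T) \rightharpoonup \bar y(\cdot,T)$ in $L^2(\Omega)$ and the convex continuous map $z \mapsto \tfrac12\|z - y_d\|_{L^2(\Omega)}^2$ is weakly lower semicontinuous. For the control terms I note $r,s>2$, so on the bounded sets $\Omega_T,\Sigma_T$ one has $L^r \hookrightarrow L^2$ and $L^s \hookrightarrow L^2$, and weak convergence in $L^r$ (resp.\ $L^s$) forces weak convergence in $L^2$, since every $L^2$ test function lies in the smaller dual exponent space $L^{r'}$ (resp.\ $L^{s'}$); hence $\tfrac{\alpha}{2}\|\cdot\|_{L^2(\Omega_T)}^2$ and $\tfrac{\beta}{2}\|\cdot\|_{L^2(\Sigma_T)}^2$ are weakly lower semicontinuous along the sequence. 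Adding the three estimates yields $J(\bar y,\bar u,\bar v) \le \liminf_n J(y_n,u_n,v_n) = j^\ast$, and since $(\bar y,\bar u,\bar v)$ is feasible the reverse inequality is automatic, so it is a minimizer.

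For uniqueness I would work with the reduced functional $j(u,v) := J(\mathcal{S}(u,v),u,v)$ on the convex admissible set $\{(u,v)\in\mathcal{U}_{ad}\times\mathcal{V}_{ad} : \mathcal{S}(u,v)\in\mathcal{Y}_{ad}\}$, which is convex because $\mathcal{S}$ is affine and $\mathcal{Y}_{ad}$ convex. As $\mathcal{S}$ is affine, the terminal state $\mathcal{S}(u,v)(\cdot,T)$ is affine in $(u,v)$, so the tracking term is convex; the Tikhonov terms $\tfrac{\alpha}{2}\|u\|_{L^2(\Omega_T)}^2 + \tfrac{\beta}{2}\|v\|_{L^2(\Sigma_T)}^2$ with $\alpha,\beta>0$ are strictly convex, so $j$ is strictly convex and admits at most one minimizer $(\bar u,\bar v)$; the optimal state $\bar y = \mathcal{S}(\bar u,\bar v)$ is then determined uniquely. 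I expect the main obstacle to be the limit passage in the state constraint, which the compactness-plus-continuity argument in the second paragraph resolves.
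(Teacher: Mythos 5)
Your proof is correct, but it takes a genuinely different route from the paper: the paper gives no argument at all for this theorem, deferring entirely to Theorem 1.43 of a cited monograph (the standard existence/uniqueness result for linear-quadratic problems with convex control and state constraints). Your direct-method argument is self-contained and sound: boundedness of the controls from the box constraints, weak sequential compactness in the reflexive spaces $L^r(\Omega_T)$ and $L^s(\Sigma_T)$, weak closedness of the closed convex sets $\mathcal{U}_{ad}$, $\mathcal{V}_{ad}$, weak-to-weak continuity of the affine map $\mathcal{S}$ via the a priori estimate \eqref{2}, weak lower semicontinuity of $J$, and strict convexity of the reduced functional (from $\alpha,\beta>0$) for uniqueness. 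One remark: the Aubin--Lions compactness step, which you single out as the main obstacle, can be dispensed with. The set $\{y\in L^2(\Omega_T): y\le\psi \text{ a.e.}\}$ is convex and strongly closed in $L^2(\Omega_T)$, hence weakly closed; since $\mathcal{W}(0,T;L^2(\Omega),H^1(\Omega))\hookrightarrow L^2(\Omega_T)$ continuously, the weak convergence $y_n\rightharpoonup\bar y$ transfers to $L^2(\Omega_T)$ and the a.e.\ constraint passes to the limit directly, with continuity of $\bar y$ and $\psi$ upgrading it to all of $\overline{\Omega}_T$ exactly as you argue. So your compactness argument is valid but heavier than needed. As for what each approach buys: the paper's citation is economical but leaves the reader to verify that the quoted theorem's hypotheses (continuity of $\mathcal{S}$ into $C(\overline{\Omega}_T)$, the product structure of the admissible set) really apply here, whereas your argument makes explicit where each standing assumption enters --- $u_a,u_b\in L^r$, $v_a,v_b\in L^s$ for the coercivity bound, $\psi\in C(\overline{\Omega}_T)$ for the constraint passage, and $\alpha,\beta>0$ for uniqueness.
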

	\begin{proof}
	This result is given in Theorem 1.43 of \cite{Ito2000A}.
\end{proof}
	\subsection{Optimality conditions}
	In order to guarantee the existence of Lagrange multipliers for state-constrained optimal control problems, we will assume the following Slater conditions hold.
	\begin{assumption}
		We assume that there exists $\hat{u}\in \mathcal{U}_{ad}, \; \hat{v}\in \mathcal{V}_{ad}$ and $\sigma>0$ such that for $\hat{y}=\mathcal{S}(\hat{u},\hat{v})$ it holds 
	$$\hat{y}(x,t)\leq \psi(x,t)-\sigma \;\;\;\forall(x,t)\in \Omega_T.$$
		\end{assumption}
	Under the basis of Assumption 2, we give the first-order necessity condition for the problem \eqref{P} in the following theorem.
		\begin{theorem}
	 Let $(\bar{y},\bar{u},\bar{v})$ be a solution of the problem \eqref{P}, let Assumption 2 be fulfilled. Then there exists an adjoint state $\bar{p} \in L^1([0,T],W^{1,s}(\Omega)) ,\; s\in (1,N/(N-1))$, and a Lagrange multiplier $\bar{\mu} \in \mathcal{M}(\overline{\Omega}_T)$ with $\bar{\mu}:=\mu_{\Omega_T}+\mu_{\Sigma_T}$, such that the following optimality system is fulfilled,\\
					\begin{equation}
		\begin{aligned}
	\bar{y}_t+A\bar{y}&=\bar{u},   \quad&\rm{in} \; \Omega_T;\\
	\partial_{\nu_A}\bar{y}&=\bar{v},  \quad&\rm{on} \; \Sigma_T;\\
	\bar{y}(\cdot,0)&=y_0,  \quad&\rm{on} \;\;\; \overline{\Omega};\\
	-\bar{p}_t+A\bar{p}&=\mu_{\Omega_T},  \; \quad&\rm{in} \; \Omega_T;\\
	\partial_{\nu_A}\bar{p}&=\mu_{\Sigma_T}, \; &\rm{on} \; \Sigma_T;\\
	\bar{p}(\cdot,T)&=\bar{y}(\cdot,T)-y_d, \; &\rm{on} \;\;\; \overline{\Omega};\\
	\int_{0}^{T}(\bar{p}+\alpha\bar{u},u-\bar{u})_{\Omega}dt&\geq0, \;\;\; &\forall u \in U_{ad};\\
	\int_{0}^{T}(\bar{p}+\alpha\bar{v},v-\bar{v})_{\Sigma}dt&\geq0, \;\;\;& \forall v \in V_{ad};\\
	\langle\bar{\mu},\bar{y}-\psi\rangle&=0, \;&\bar{\mu}\geqslant0.
		\end{aligned}
			\label{3}
				\end{equation}\\
		\end{theorem}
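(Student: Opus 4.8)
The plan is to pass to the \emph{reduced} formulation in the control variables and then apply a Lagrange multiplier theorem in Banach spaces, using the Slater condition of Assumption~2 as the constraint qualification. Since the state equation is linear with affine dependence on the data, the control-to-state map $\mathcal{S}$ from Theorem~2.2 is affine, so the reduced cost $f(u,v):=J(\mathcal{S}(u,v),u,v)$ is convex (it is quadratic), and the whole problem is convex. Concretely, I would define the constraint operator $G(u,v):=\mathcal{S}(u,v)-\psi\in C(\overline{\Omega}_T)$ and rewrite \eqref{P} as: minimize $f(u,v)$ over $(u,v)\in\mathcal{U}_{ad}\times\mathcal{V}_{ad}$ subject to $G(u,v)\in K$, where $K:=\{\phi\in C(\overline{\Omega}_T):\phi\le 0\}$ is the negative cone. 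Its dual (polar) cone in $\mathcal{M}(\overline{\Omega}_T)=C(\overline{\Omega}_T)^*$ is exactly the cone of nonnegative Radon measures, which is where the multiplier $\bar\mu$ will live.

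Second, I would verify the regular-point condition and invoke the abstract KKT theorem. The Slater point $(\hat u,\hat v)$ with $\hat y=\mathcal{S}(\hat u,\hat v)\le\psi-\sigma$ places $G(\hat u,\hat v)$ in the interior of $K$ with respect to the $C(\overline{\Omega}_T)$ topology; together with convexity this yields Robinson's (Zowe--Kurcyusz) constraint qualification. The generalized KKT theorem then produces a multiplier $\bar\mu\in\mathcal{M}(\overline{\Omega}_T)$ with $\bar\mu\ge 0$ such that the Lagrangian $L(u,v,\mu):=f(u,v)+\langle\mu,G(u,v)\rangle$ is stationary at $(\bar u,\bar v)$ over the convex set $\mathcal{U}_{ad}\times\mathcal{V}_{ad}$, and the complementarity $\langle\bar\mu,\bar y-\psi\rangle=0$ holds. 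The sign condition $\bar\mu\ge 0$ and complementarity are thus obtained directly from the abstract multiplier and require no further work.

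Third, I would translate stationarity of $L$ into the stated adjoint system. Differentiating, $z:=\mathcal{S}'(\bar u,\bar v)(\delta u,\delta v)$ solves the linearized (here, same) state equation with data $(\delta u,\delta v)$. I introduce the adjoint state $\bar p$ as the very weak (transposition) solution of the backward problem $-\bar p_t+A\bar p=\mu_{\Omega_T}$ in $\Omega_T$, $\partial_{\nu_A}\bar p=\mu_{\Sigma_T}$ on $\Sigma_T$, $\bar p(\cdot,T)=\bar y(\cdot,T)-y_d$, where $\bar\mu=\mu_{\Omega_T}+\mu_{\Sigma_T}$ is the splitting of $\bar\mu$ into its restrictions to $\Omega_T$ and to $\Sigma_T$. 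Testing the linearized equation against $\bar p$ and the adjoint equation against $z$, a Green/integration-by-parts identity should give $\langle\bar\mu,z\rangle=\int_0^T(\bar p,\delta u)_\Omega\,dt+\int_0^T(\bar p,\delta v)_\Sigma\,dt$. Inserting this into the stationarity relation for $L$, together with the derivative of the tracking and regularization terms of $f$, yields the two variational inequalities of \eqref{3} once the admissible directions $\delta u=u-\bar u$, $\delta v=v-\bar v$ are restricted to the cones generated by $\mathcal{U}_{ad},\mathcal{V}_{ad}$.

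The main obstacle is the low regularity of $\bar p$. Because the adjoint data is only a measure, $\bar p$ cannot lie in the energy space; one instead obtains $\bar p\in L^1(0,T;W^{1,s}(\Omega))$ with $s\in(1,N/(N-1))$ from the regularity theory for parabolic equations with measure right-hand sides (Casas, Raymond). The delicate point is to \emph{justify} the duality identity above at this regularity, since $\bar p$ is merely integrable and the boundary part $\mu_{\Sigma_T}$ may concentrate on $\Sigma_T$. I would handle this by a regularization/density argument: approximate $\bar\mu$ by smooth data, solve the regular adjoint problems, and pass to the limit using the a priori bound \eqref{2} and the continuity of $z=\mathcal{S}'(\delta u,\delta v)$ in $C(\overline{\Omega}_T)$, so that $\langle\bar\mu,z\rangle$ coincides with the $\bar p$-representation in the limit. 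I expect this limiting argument, and the clean splitting of $\bar\mu$ into interior and boundary parts, to be the technical heart of the proof; everything else follows from convexity and the abstract theorem.
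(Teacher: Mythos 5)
Your proposal follows essentially the same route as the paper: the paper gives no proof of its own but defers to Theorem 1.51 of Hinze--Pinnau--Ulbrich--Ulbrich, and that proof is exactly your argument --- reduce to the controls, use convexity of the quadratic cost with the affine map $\mathcal{S}$, verify the Zowe--Kurcyusz/Slater constraint qualification from Assumption~2 (your observation that $\mathcal{S}(\hat u,\hat v)-\psi\le-\sigma$ lies in the interior of the negative cone of $C(\overline{\Omega}_T)$ is the right one), obtain $\bar\mu\ge 0$ with complementarity from the abstract KKT theorem, and represent the derivative of the measure term through a transposition adjoint with the Casas/Raymond regularity $\bar p\in L^1(0,T;W^{1,s}(\Omega))$, $s\in(1,N/(N-1))$.

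One bookkeeping correction: your Green identity is stated without the terminal term, but it should read
\begin{equation*}
	\langle\bar\mu,z\rangle+\bigl(z(\cdot,T),\,\bar y(\cdot,T)-y_d\bigr)_{\Omega}
	=\int_0^T(\bar p,\delta u)_{\Omega}\,dt+\int_0^T(\bar p,\delta v)_{\Sigma}\,dt,
\end{equation*}
since integrating by parts in time produces $\bigl(z(\cdot,T),\bar p(\cdot,T)\bigr)_{\Omega}$ and the terminal condition $\bar p(\cdot,T)=\bar y(\cdot,T)-y_d$ is precisely what absorbs the derivative of the tracking term. With the identity as you wrote it, inserting into the stationarity of the Lagrangian leaves the tracking derivative $\bigl(\bar y(\cdot,T)-y_d,z(\cdot,T)\bigr)_{\Omega}$ dangling and the variational inequalities of \eqref{3} do not come out clean. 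This is a slip rather than a gap in the method; the rest of the outline, including the regularization/density argument needed to justify the duality pairing at measure-data regularity, is the technical content of the cited proof.
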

		\begin{proof}
	The proof of this Theorem is similar to Theorem 1.51 on page 80 in \cite{Hinze2009O} 
	\end{proof}
	The definition of the weak solution for the adjoint variable $\bar{p}$ is presented below.
	\begin{definition}
	A function $\bar{p} \in L^1([0,T],W^{1,s}(\Omega)) , \; s\in (1,N/(N-1))$ is called a weak solution of the state equation if it holds\\
	\[
	\begin{aligned}
		&\int_0^T\int_\Omega -\bar{p}_t(x,t)q(x,t)dxdt 
		- \int_0^T\int_\Sigma \mu_{\Sigma_T}q(x,t)dSdt \\
		&\quad + \int_0^T\int_\Omega \sum_{i,j=1}^{N} a_{ij}(x)\partial_{x_i}\bar{p}(x,t)\partial_{x_j}q(x,t)dxdt \\
		&= \int_0^T\int_\Omega \mu_{\Omega_T}q(x,t)dxdt,
		\quad \forall q \in L^\infty([0,T],W^{-1,s'}(\Omega)), \;\frac{1}{s'}+\frac{1}{s}=1 .
	\end{aligned}
	\]
	\end{definition}
		\begin{theorem}
Let $\bar{\mu} \in \mathcal{M}(\overline{\Omega}_T)$ be a regular Borel measure with $\bar{\mu}=\mu_{\Omega_T}+\mu_{\Sigma_T}$. There exists a unique solution  $p$ of the adjoint equation\\
	\[
\begin{aligned}
	-\bar{p}_t+A\bar{p}&=\mu_{\Omega_T},  \; &\rm{in} \; \Omega_T;\\
	\partial_{\nu_A}\bar{p}&=\mu_{\Sigma_T}, \; &\rm{on} \; \Sigma_T;\\
	\bar{p}(\cdot,T)&=\bar{y}(\cdot,T)-y_d, \;& \rm{on} \;\;\; \overline{\Omega}, \\
	    \end{aligned}
	\]
	where $\bar{p} \in L^r([0,T],W^{1,p}(\Omega)),\; r,p \in [1,2)$ with $(2/r)+(N/p) >N+1$. Then it holds that,\\
						\begin{equation}
		\begin{aligned}
	\parallel \bar{p}\parallel_{L^r([0,T],W^{1,p}(\Omega))} \leqslant C(\parallel \bar{\mu} \parallel_{\mathcal{M}(\bar{\Omega}_T)}+\parallel \bar{y}(\cdotp,T) \parallel_{L^2(\Omega)}+ \parallel y_d \parallel_{L^2(\Omega)}),
		\end{aligned}
					\end{equation}
	where constant $C$ is independent of $\bar{\mu}$, $\bar{y}(\cdotp,T)$ and $y_d$.
					\end{theorem}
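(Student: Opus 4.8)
The plan is to construct $\bar p$ by the transposition (duality) method, the standard device for parabolic problems driven by a measure. Since the adjoint equation is backward in time, I would first set $\tau = T-t$ to recast it as a forward Cauchy problem, so that the forward regularity of Theorem 2.2 applies verbatim; this is purely cosmetic. The idea is to \emph{define} $\bar p$ through its action against solutions of a well-posed forward problem rather than to solve the measure-driven equation directly. For test data $\phi$ (in general of divergence form, see below) I let $z=z(\phi)$ solve $z_t+Az=\phi$, $\partial_{\nu_A}z=0$, $z(\cdot,0)=0$. Multiplying the adjoint equation by $z$, integrating over $\Omega_T$, and integrating by parts in $t$ (using $z(\cdot,0)=0$ and $\bar p(\cdot,T)=\bar y(\cdot,T)-y_d$) and in $x$ (using symmetry of $a_{ij}$, the boundary condition $\partial_{\nu_A}\bar p=\mu_{\Sigma_T}$, and $\partial_{\nu_A}z=0$), every derivative falls on $z$ and one reaches the identity
\[
\int_0^T\!\!\int_\Omega \bar p\,\phi\,dx\,dt=\langle\bar\mu,z\rangle+\big(\bar y(\cdot,T)-y_d,\ z(\cdot,T)\big)_\Omega,
\]
recalling $\bar\mu=\mu_{\Omega_T}+\mu_{\Sigma_T}$.

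The a priori estimate is read off from this identity. Bounding the right-hand side gives $|\langle\bar\mu,z\rangle|\le\|\bar\mu\|_{\mathcal M(\overline\Omega_T)}\|z\|_{C(\overline\Omega_T)}$ and $|\big(\bar y(\cdot,T)-y_d,z(\cdot,T)\big)_\Omega|\le(\|\bar y(\cdot,T)\|_{L^2}+\|y_d\|_{L^2})\|z(\cdot,T)\|_{L^2}$. What makes the argument close is exactly the forward embedding $\mathcal Y\hookrightarrow C(\overline\Omega_T)$ together with estimate \eqref{2} of Theorem 2.2: they ensure that $z\mapsto(z,z(\cdot,T))$ is bounded from the test space into $C(\overline\Omega_T)\times L^2(\Omega)$, i.e.\ $\|z\|_{C(\overline\Omega_T)}+\|z(\cdot,T)\|_{L^2}\le C\|\phi\|$. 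Hence the linear functional $\phi\mapsto\langle\bar\mu,z(\phi)\rangle+(\bar y(\cdot,T)-y_d,z(\phi)(\cdot,T))_\Omega$ is bounded on the predual test space with norm controlled by $\|\bar\mu\|_{\mathcal M}+\|\bar y(\cdot,T)\|_{L^2}+\|y_d\|_{L^2}$, and the duality characterisation of $L^r([0,T],W^{1,p}(\Omega))$ represents it by a unique $\bar p$ in that space, yielding simultaneously the claimed bound with $C$ independent of the data.

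For the rigorous existence I would, if the low exponents obstruct a direct Riesz step, mollify $\bar\mu$ by smooth $\mu_k\to\bar\mu$ weak-$*$ in $\mathcal M(\overline\Omega_T)$, solve the resulting regular adjoint problems for $p_k$, use the estimate above (uniform in $k$) to extract a subsequence converging weakly in $L^r([0,T],W^{1,p}(\Omega))$, and pass to the limit in the weak formulation of Definition 3. Uniqueness follows at once from the transposition identity: the difference of two solutions pairs to zero against every admissible $\phi$, hence vanishes.

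The crux---and the step I expect to cost the most effort---is the bookkeeping of function spaces. I must identify the predual of $L^r([0,T],W^{1,p}(\Omega))$, represent its elements as an $L^{p'}$ function plus the divergence of an $L^{p'}$ vector field so that the pairing with $\nabla\bar p$ is meaningful, and verify that the forward problem with such divergence-form data still yields $z\in C(\overline\Omega_T)$. This continuity is precisely where the restriction $r,p\in[1,2)$ with $2/r+N/p>N+1$ is forced: it is the H\"older/Sobolev dual of the conditions $r>N/2+1$, $s>N+1$ under which Theorem 2.2 produces continuous solutions, so the admissible pairs $(r,p)$ are dictated by the parabolic embedding into $C(\overline\Omega_T)$. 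Once this duality is pinned down, existence, uniqueness, and the a priori bound all follow from the single boundedness estimate above.
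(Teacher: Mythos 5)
The paper offers no proof of its own here—it simply cites Theorem 6.3 of Casas (1997)—and that cited result is itself established by exactly the transposition/duality argument you outline: define $\bar p$ through its pairing with solutions of the well-posed forward problem, bound the resulting functional via $\|z\|_{C(\overline{\Omega}_T)}+\|z(\cdot,T)\|_{L^2(\Omega)}$, and read off existence, uniqueness, and the estimate, with the exponent restriction $2/r+N/p>N+1$ arising precisely from the dual condition $2/r'+N/p'<1$ needed for continuity of the forward solution with divergence-form data. Your proposal is correct and follows essentially the same approach as the paper's source, so there is nothing substantive to fault beyond the (acknowledged) reliance on the standard Ladyzhenskaya--Solonnikov--Ural'tseva-type regularity lemma for that forward problem.
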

	\begin{proof}
	The Theorem has been proved in Theorem 6.3 of \cite{Casas1997P}.
	\end{proof}
	\section{The augmented Lagrange method}
	Since the Lagrange multiplier corresponding to the inequality constraint $y \leq \psi$ in problem \eqref{P} belongs to the Radon measure space. It is difficult to deal with it numerically in the actual problem, so we add inequality constraint $y \leq \psi$ as a penalty term to the objective function to solve the problem.
	\subsection{The augmented Lagrange optimal control problem}
	Let $\rho>0$ be a given penalty parameter, and let $\mu \in L^2(\Omega_T)$ with $\mu>0$ be a given Lagrange multiplier. By reference \cite{B2019L}, we get the following augmented Lagrange function of \eqref{P}
		\[
	\begin{aligned}
	L_{\rho}(y,u,v,\mu):=&J(y,u,v)+\frac{\rho}{2}\parallel y-\psi+\frac{\mu}{\rho}-\mathcal{P}_{K}(y-\psi+\frac{\mu}{\rho})\parallel^2_{L^2(\Omega_T)}\\
	&-\frac{1}{2\rho}\parallel \mu\parallel^2_{L^2(\Omega_T)},
		    \end{aligned}
	\]
    for every $(y,u,v,\mu) \in \mathcal{Y} \times \mathcal{U} \times \mathcal{V} \times L^2(\Omega_T).$ Where $\mathcal{P}_K$ is the projection function from $L^2(\Omega_T)$ to $K=\{y \in L^2(\Omega_T)\mid y\leq 0\}$, which can be simply expressed as $\mathcal{P}_K(y)=\min\{0,y\}$. Therefore the augmented Lagrange function can be expressed as:\\
	$$L_{\rho}(y,u,v,\mu):=J(y,u,v)+\frac{1}{2\rho}\int_{\Omega_T}(\rho(y-\psi)+\mu)_+^2-\mu^2dxdt,$$
   Where $(y)_+:=\max(y,0)$. Therefore, when $\rho>0$ and $ \mu \in L^2(\Omega_T)$ are already given, The original problem \eqref{P} is transformed into the minimization of the augmented Lagrange function under the equations of state constraint in each step of the sub-problem.  The sub-problem of the augmented Lagrange method can be expressed in the following form:\\
\begin{equation}
	\begin{aligned}
		\min \;\;\;&L_{\rho}(y_\rho, u_\rho, v_\rho, \mu)\\
	  \rm{s.t.} \quad &y_\rho=S(u_\rho,v_\rho),\; u_\rho\in U_{ad},\; v_\rho\in V_{ad}.
	  	\end{aligned}
	  \tag{$P_{\rho,\mu}$}
	  \label{eq:P_rho_mu}
  \end{equation}
	We give the following theorem to guarantee the existence of optimal control $(\bar{u}_\rho,\bar{v}_\rho)$ in problem \eqref{eq:P_rho_mu}.
		\begin{theorem}
	For every $\rho >0$ and every $ \mu \in L^2(\Omega_T)$ with $\mu\geq 0$ the augmented Lagrange control problem \eqref{eq:P_rho_mu} admits a unique solution $(\bar{u}_\rho,\bar{v}_\rho)\in \mathcal{U}_{ad}\times \mathcal{V}_{ad}$ with associated optimal state $y_\rho \in \mathcal{Y}$.
		\end{theorem}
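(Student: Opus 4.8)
The plan is to apply the direct method of the calculus of variations, exploiting the convexity of the augmented Lagrange functional together with the (weak) continuity of the control-to-state map $S$; uniqueness will then follow from strict convexity in the control variables. First I would check that $L_\rho$ is bounded below on the feasible set: since $(\cdot)_+^2 \ge 0$ and $J \ge 0$, one has $L_\rho(y,u,v,\mu) \ge -\frac{1}{2\rho}\|\mu\|_{L^2(\Omega_T)}^2$, so that $m := \inf\{\,L_\rho(S(u,v),u,v,\mu) : u \in \mathcal U_{ad},\, v \in \mathcal V_{ad}\,\}$ is finite. I would then fix a minimizing sequence $(u_n,v_n) \in \mathcal U_{ad}\times\mathcal V_{ad}$ with states $y_n = S(u_n,v_n)$.

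Because the controls are trapped between $u_a,u_b$ and $v_a,v_b$ (Assumption 1), the sets $\mathcal U_{ad},\mathcal V_{ad}$ are bounded in $L^r(\Omega_T)$ and $L^s(\Sigma_T)$, which are reflexive for $r>N/2+1>1$ and $s>N+1>1$. Hence, after extracting a subsequence, $u_n \rightharpoonup \bar u$ in $L^r(\Omega_T)$ and $v_n \rightharpoonup \bar v$ in $L^s(\Sigma_T)$. Since $\mathcal U_{ad},\mathcal V_{ad}$ are nonempty closed convex sets, they are weakly sequentially closed, so $\bar u \in \mathcal U_{ad}$ and $\bar v \in \mathcal V_{ad}$. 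Next I would pass to the limit in the state: as $S$ is affine and continuous from $L^r(\Omega_T)\times L^s(\Sigma_T)$ into $\mathcal Y$ (a consequence of the a priori bound \eqref{2}), it is weakly continuous, whence $y_n = S(u_n,v_n)\rightharpoonup \bar y := S(\bar u,\bar v)$; in particular $y_n \rightharpoonup \bar y$ weakly in $L^2(\Omega_T)$ and $y_n(\cdot,T)\rightharpoonup \bar y(\cdot,T)$ weakly in $L^2(\Omega)$. (Alternatively, the compact embedding of $\mathcal W(0,T;L^2(\Omega),H^1(\Omega))$ into $L^2(\Omega_T)$ furnished by the Aubin--Lions lemma yields strong convergence $y_n\to\bar y$ in $L^2(\Omega_T)$.)

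It then remains to verify weak lower semicontinuity of each term of $L_\rho$. The tracking term $\frac{1}{2}\|y(\cdot,T)-y_d\|_{L^2(\Omega)}^2$ and the control costs $\frac{\alpha}{2}\|u\|^2,\ \frac{\beta}{2}\|v\|^2$ are convex and strongly continuous, hence weakly l.s.c. For the penalty term I would argue that $y \mapsto \frac{1}{2\rho}\int_{\Omega_T}(\rho(y-\psi)+\mu)_+^2\,dx\,dt$ is convex, being the integral of the convex nondecreasing function $t\mapsto t_+^2$ composed with the affine expression $y\mapsto \rho(y-\psi)+\mu$, and is strongly continuous on $L^2(\Omega_T)$; a convex, strongly continuous functional is weakly l.s.c. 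Combining these facts gives $L_\rho(\bar y,\bar u,\bar v,\mu)\le \liminf_n L_\rho(y_n,u_n,v_n,\mu)=m$, so $(\bar u,\bar v)$ is a minimizer with associated state $\bar y=S(\bar u,\bar v)\in\mathcal Y$.

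For uniqueness I would observe that the reduced functional $(u,v)\mapsto L_\rho(S(u,v),u,v,\mu)$ is strictly convex: the terms $\frac{\alpha}{2}\|u\|^2+\frac{\beta}{2}\|v\|^2$ are strictly convex since $\alpha,\beta>0$, while the tracking and penalty terms are convex because $S$ is affine and each is a convex function of the state. A strictly convex functional has at most one minimizer, which yields uniqueness of $(\bar u_\rho,\bar v_\rho)$ and, by well-posedness of $S$, of $y_\rho$. The main obstacle I anticipate is the nondifferentiable penalty term: its weak lower semicontinuity cannot be deduced from smoothness and must instead be extracted from convexity, so the delicate step is to justify convexity of $t\mapsto t_+^2$, its stability under composition with the affine map $y\mapsto\rho(y-\psi)+\mu$, and its strong continuity on $L^2(\Omega_T)$.
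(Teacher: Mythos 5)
Your proposal is correct, but it is not the argument the paper gives, because the paper gives none: Theorem 3.1 is ``proved'' there by a bare citation to Tr\"oltzsch and De los Reyes, so the specific structure of the augmented Lagrange functional (in particular the nonsmooth penalty term $\frac{1}{2\rho}\int_{\Omega_T}(\rho(y-\psi)+\mu)_+^2\,dx\,dt$) is never addressed in the text. What you supply is the standard direct-method proof that those references rely on, and all the delicate points are handled properly: $\mathcal U_{ad}$, $\mathcal V_{ad}$ are bounded, closed and convex in the reflexive spaces $L^r(\Omega_T)$, $L^s(\Sigma_T)$, hence weakly sequentially compact; the control-to-state map $S$ is affine and continuous by the a priori estimate \eqref{2}, hence weak-to-weak continuous (and the time trace $y\mapsto y(\cdot,T)$ is linear and continuous from $\mathcal W(0,T;L^2(\Omega),H^1(\Omega))\hookrightarrow C([0,T];L^2(\Omega))$ to $L^2(\Omega)$, which is exactly what legitimizes passing to the limit in the tracking term --- this is the one step worth stating explicitly rather than as ``in particular''); the penalty term is convex, since $t\mapsto t_+^2$ is convex and nondecreasing and is composed with an affine map, and it is strongly continuous on $L^2(\Omega_T)$ because $y\mapsto(\rho(y-\psi)+\mu)_+$ is Lipschitz from $L^2$ to $L^2$, so convexity plus strong continuity gives weak lower semicontinuity; and strict convexity of $\frac{\alpha}{2}\|u\|^2+\frac{\beta}{2}\|v\|^2$ jointly in $(u,v)$, with all remaining terms convex in $(u,v)$ through the affine $S$, yields uniqueness of the optimal controls and hence of the state. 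What your route buys is a self-contained verification that no Slater-type condition is needed for the sub-problem and that the nonsmoothness of $(\cdot)_+$ is harmless; what the paper's citation buys is brevity, at the cost of leaving precisely these points unchecked.
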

		\begin{proof}
	The theorem can be found in \cite{DelosReyes2015N,Tr2010O}.
	\end{proof}
    As the inequality state constraint in problem \( \eqref{P} \) is incorporated into the augmented Lagrange function, the Slater condition is not required for deriving the optimality condition. Furthermore, the first-order necessary condition for the optimal sub-problem \( \eqref{eq:P_rho_mu} \) is established through the following theorem.
			\begin{theorem}
 For every $\rho >0$ and every $ \mu \in L^2(\Omega_T)$ with $\mu\geq 0$, let $(\bar{y}_\rho,\bar{u}_\rho,\bar{v}_\rho)$ be the solution of \eqref{eq:P_rho_mu}. Then there exists a unique adjoint state $\bar{p}_\rho \in L^2(0,T;H^{-1}(\Omega))$, satisfying the following system,
				\begin{equation}
		\begin{aligned}
	(\bar{y}_\rho)_t+A\bar{y}_\rho&=\bar{u}_\rho,  \; &\rm{in} \; \Omega_T;\\
	\partial_{\nu_A}\bar{y}_\rho&=\bar{v}_\rho, \; &\rm{on} \; \Sigma_T;\\
	\bar{y}_\rho(\cdot,0)&=y_0, \; &\rm{on} \;\;\; \overline{\Omega};\\
	-(\bar{p}_\rho)_t+A\bar{p}_\rho&=\bar{\mu}_\rho,  \; &\rm{in} \; \Omega_T;\\
	\partial_{\nu_A}\bar{p}_\rho&=0, \; &\rm{on} \; \Sigma_T;\\
    \bar{p}_\rho(\cdot,T)&=\bar{y}_\rho(\cdot,T)-y_d, \; &\rm{on} \;\;\; \overline{\Omega};\\
	\int_{0}^{T}(\bar{p}_\rho+\alpha\bar{u}_{\rho},u-\bar{u}_{\rho})_{\Omega}dt&\geq0, \;\;\; &\forall u \in U_{ad};\\
    \int_{0}^{T}(\bar{p}_\rho+\alpha\bar{v}_{\rho},v-\bar{v}_{\rho})_{\Sigma}dt&\geq0, \;\;\; &\forall v \in V_{ad};\\
	\bar{\mu}_\rho&:=(\rho(\bar{y}_\rho-\psi)+\mu)_+.
		\end{aligned}
		\label{5}
			\end{equation}
					\end{theorem}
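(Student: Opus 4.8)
The plan is to eliminate the state via the affine control-to-state operator $\mathcal{S}$ and work with the reduced cost $f(u,v):=L_{\rho}(\mathcal{S}(u,v),u,v,\mu)$ on the convex admissible set $\mathcal{U}_{ad}\times\mathcal{V}_{ad}$. Since $(\bar{u}_\rho,\bar{v}_\rho)$ minimizes $f$ over this convex set, the first-order condition is the variational inequality $f'(\bar{u}_\rho,\bar{v}_\rho)(u-\bar{u}_\rho,v-\bar{v}_\rho)\geq 0$ for all $(u,v)\in\mathcal{U}_{ad}\times\mathcal{V}_{ad}$; separating the two independent directions $u-\bar{u}_\rho$ and $v-\bar{v}_\rho$ will produce the two inequalities in \eqref{5}. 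Everything thus reduces to computing $f'$ and rewriting it through an adjoint state.

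The decisive ingredient is the differentiability of the penalty term. Writing $\phi(t):=(t)_+^2$, one has $\phi\in C^1(\mathbb{R})$ with $\phi'(t)=2(t)_+$ globally Lipschitz, so $\phi$ grows at most quadratically and $\phi'$ at most linearly. Under these growth bounds the integral functional $y\mapsto \frac{1}{2\rho}\int_{\Omega_T}(\rho(y-\psi)+\mu)_+^2\,dx\,dt$ is Fréchet differentiable from $L^2(\Omega_T)$ to $\mathbb{R}$, its derivative acting as $h\mapsto \int_{\Omega_T}(\rho(y-\psi)+\mu)_+\,h\,dx\,dt$. This is exactly where the augmented formulation gains regularity over \eqref{P}: because $\bar{y}_\rho\in C(\overline{\Omega}_T)\subset L^2(\Omega_T)$ and $\mu\in L^2(\Omega_T)$, the quantity $\bar{\mu}_\rho:=(\rho(\bar{y}_\rho-\psi)+\mu)_+$ is an $L^2(\Omega_T)$ function rather than merely an element of $\mathcal{M}(\overline{\Omega}_T)$.

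Combining this with the smooth part of $f$, the tracking term $\tfrac{1}{2}\|\mathcal{S}(u,v)(\cdot,T)-y_d\|_{L^2(\Omega)}^2$ and the two Tikhonov terms are differentiated in the usual way, using that $\mathcal{S}$ is affine and continuous so that $\mathcal{S}'(u,v)(h_u,h_v)=y'$ solves the linearized state equation with right-hand side $h_u$, co-normal datum $h_v$, and zero initial value. This gives
\[
f'(\bar{u}_\rho,\bar{v}_\rho)(h_u,h_v)=\bigl(\bar{y}_\rho(\cdot,T)-y_d,\,y'(\cdot,T)\bigr)_{\Omega}+\int_{\Omega_T}\bar{\mu}_\rho\,y'\,dx\,dt+\alpha\!\int_{\Omega_T}\!\bar{u}_\rho h_u\,dx\,dt+\beta\!\int_{\Sigma_T}\!\bar{v}_\rho h_v\,dS\,dt .
\]
To remove the dependence on $y'$ I would introduce $\bar{p}_\rho$ as the unique weak solution of the backward adjoint equation with source $\bar{\mu}_\rho$, homogeneous co-normal data, and terminal value $\bar{y}_\rho(\cdot,T)-y_d$; existence, uniqueness, and the regularity $\bar{p}_\rho\in L^2(0,T;H^{-1}(\Omega))$ follow from standard linear parabolic theory, since both $\bar{\mu}_\rho\in L^2(\Omega_T)$ and $\bar{y}_\rho(\cdot,T)-y_d\in L^2(\Omega)$. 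Testing the linearized state equation with $\bar{p}_\rho$ and the adjoint equation with $y'$, then using the symmetry of the bilinear form together with integration by parts in time and $y'(\cdot,0)=0$, converts the first two terms above into $\int_{\Omega_T}\bar{p}_\rho h_u\,dx\,dt+\int_{\Sigma_T}\bar{p}_\rho h_v\,dS\,dt$.

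Substituting the feasible direction $(h_u,h_v)=(u-\bar{u}_\rho,v-\bar{v}_\rho)$ into $f'(\bar{u}_\rho,\bar{v}_\rho)(h_u,h_v)\geq 0$ and decoupling the interior and boundary parts then yields the two variational inequalities of \eqref{5}, while the state and adjoint equations and the definition of $\bar{\mu}_\rho$ are recorded directly. I expect the main obstacle to be the rigorous justification of the Fréchet differentiability of the penalty functional: although the pointwise map $y\mapsto(y)_+$ is \emph{not} differentiable as a Nemytskii operator on $L^2(\Omega_T)$, the integrated square $y\mapsto\int_{\Omega_T}(y)_+^2$ \emph{is}, and one must verify carefully that differentiation may be interchanged with the superposition operator under the stated growth conditions on $\phi$ and $\phi'$. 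Once this is in place, the adjoint calculus and the passage to the variational inequalities are routine.
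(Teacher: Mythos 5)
Your proposal is correct and is essentially the same argument as the one the paper relies on: the paper's "proof" is a bare citation (Corollary 1.3, p.\ 73 of the Hinze--Pinnau--Ulbrich--Ulbrich book), and what that corollary encapsulates is exactly your route --- reduce to the controls via the affine map $\mathcal{S}$, exploit that $t\mapsto (t)_+^2$ is $C^1$ with Lipschitz derivative so the penalty functional is Fr\'echet differentiable on $L^2(\Omega_T)$ with derivative $h\mapsto\int_{\Omega_T}\bar{\mu}_\rho h\,dx\,dt$, represent the gradient through the adjoint state, and split the variational inequality over the product set $\mathcal{U}_{ad}\times\mathcal{V}_{ad}$. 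The only discrepancy is that your (correct) computation yields $\beta\bar{v}_\rho$ in the boundary variational inequality, whereas the paper's statement writes $\alpha\bar{v}_\rho$ there --- evidently a typo in the paper, since $\beta$ is the Tikhonov weight on $v$.
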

					\begin{proof}
		 We can find it in Corollary 1.3 on page 73 of \cite{Hinze2009O}.
			\end{proof}
	\subsection{The augmented Lagrange algorithm in brief}
    In this section, a brief introduction to the augmented Lagrange algorithm is provided. For convenience, the sub-problem \( (P_{\rho_k, \mu_k}) \) at iteration \( k \) is abbreviated as \( (P_k) \). Similarly, the solution \( (\bar{y}_{\rho_k}, \bar{u}_{\rho_k}, \bar{v}_{\rho_k}) \), the dual variable \( \bar{p}_{\rho_k} \), and the Lagrange multiplier \( \bar{\mu}_{\rho_k} \) corresponding to iteration \( k \) are denoted by \( (\bar{y}_k, \bar{u}_k, \bar{v}_k, \bar{p}_k, \bar{\mu}_k) \). The following Algorithm outlines a concise augmented Lagrange method.
\begin{algorithm}
	 \caption{} 
	 \begin{algorithmic}[1]
		 \State \textbf{Input:} $\rho_1 > 0$, $\mu_1 \in L^2(\Omega_T)$ with $\mu_1 \geq 0$. Choose $\gamma > 1$. 
		 \State Initialize $k = 1$. 
		 \Repeat 
		 \State Solve $(P_k)$, and obtain $(\bar{y}_k, \bar{u}_k, \bar{v}_k, \bar{p}_k)$. \State Set $\bar{\mu}_k := (\rho_k (\bar{y}_k - \psi) + \mu_k)_+$.
		  \If{the step is successful}
		   \State Set $\mu_{k+1} := \bar{\mu}_k$, $\rho_{k+1} := \rho_k$. 
		   \Else 
		   \State Set $\mu_{k+1} := \bar{\mu}_k$, increase penalty parameter $\rho_{k+1} := \gamma \rho_k$.
		    \EndIf 
		    \State Set $k := k + 1$. 
		    \Until{stopping criterion is satisfied.}
	     \end{algorithmic} 
     \end{algorithm} \\
     Two key issues remain to be addressed in this algorithm. The first is defining the successful step and specifying the termination condition, which will be discussed in the next section. The second is solving the sub-problem \( (P_k) \). The MSA algorithm can be utilized to obtain the optimal solution of \( (P_k) \) by minimizing the control of the Hamiltonian. A detailed analysis of this procedure will be provided in Section 4.
	\subsection{The multiplier update rule}
    First, we present a lemma that establishes how the control variables \( u \) and \( v \) can be governed by the multipliers and state constraints.
	\begin{lemma}
	Let $(\bar{y},\bar{u},\bar{v},\bar{\mu})$ be a solution of \eqref{3} and let $(\bar{y}_k,\bar{u}_k,\bar{v}_k,\bar{\mu}_k)$ solve \eqref{5} at k step. Then it holds\\
					\begin{equation}
		\begin{aligned}
	\alpha \parallel \bar{u}-\bar{u}_k\parallel^2_{L^2(\Omega_T)}+\beta \parallel \bar{v}-\bar{v}_k \parallel^2_{L^2(\Sigma_T)}\\
	\leq  \int^T_0(\bar{\mu}_{k},\psi-\bar{y}_k)_{\Omega}dt+\langle \bar{\mu},\bar{y}_k-\psi \rangle.
			\end{aligned}
					\end{equation}
						\end{lemma}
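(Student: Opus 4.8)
The plan is to exploit the monotone structure of the two variational inequalities appearing in \eqref{3} and \eqref{5}, and then to transport the resulting adjoint pairings onto the multipliers through the state and adjoint equations. First I would test the control and boundary variational inequalities of \eqref{3} with the admissible pair $(\bar u_k,\bar v_k)$, and those of \eqref{5} with $(\bar u,\bar v)$. Adding the four inequalities, the linear $\bar p$-parts recombine into pairings of $\bar p-\bar p_k$, while the quadratic parts collapse to $-\alpha\|\bar u-\bar u_k\|_{L^2(\Omega_T)}^2-\beta\|\bar v-\bar v_k\|_{L^2(\Sigma_T)}^2$, giving
\[
\alpha\|\bar u-\bar u_k\|_{L^2(\Omega_T)}^2+\beta\|\bar v-\bar v_k\|_{L^2(\Sigma_T)}^2\le\int_0^T(\bar p-\bar p_k,\bar u_k-\bar u)_\Omega\,dt+\int_0^T(\bar p-\bar p_k,\bar v_k-\bar v)_\Sigma\,dt.
\]

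Next I would introduce $z:=\bar y_k-\bar y\in\mathcal Y$, which by subtracting the state parts of \eqref{3} and \eqref{5} solves the state equation with source $(\bar u_k-\bar u,\bar v_k-\bar v)$ and homogeneous initial datum $z(\cdot,0)=0$. To evaluate the right-hand side I would test the adjoint equation for $\bar p$ against $z$ and the adjoint equation for $\bar p_k$ against $z$, in each case integrating by parts in time and using Green's formula for $A$. Since $z\in\mathcal Y\subset C(\overline\Omega_T)$ is an admissible test function for the measure-valued adjoint problem, the first test converts the $\bar p$-pairings into $\langle\bar\mu,z\rangle+(\bar y(\cdot,T)-y_d,z(\cdot,T))_\Omega$, using $\bar\mu=\mu_{\Omega_T}+\mu_{\Sigma_T}$ and $\bar p(\cdot,T)=\bar y(\cdot,T)-y_d$; the second test converts the $\bar p_k$-pairings into $\int_0^T(\bar\mu_k,z)_\Omega\,dt+(\bar y_k(\cdot,T)-y_d,z(\cdot,T))_\Omega$, the boundary contribution vanishing because $\partial_{\nu_A}\bar p_k=0$. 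Subtracting, the terminal terms telescope to $(\bar y(\cdot,T)-\bar y_k(\cdot,T),z(\cdot,T))_\Omega=-\|z(\cdot,T)\|_{L^2(\Omega)}^2$, so that the displayed bound becomes
\[
\alpha\|\bar u-\bar u_k\|_{L^2(\Omega_T)}^2+\beta\|\bar v-\bar v_k\|_{L^2(\Sigma_T)}^2\le\langle\bar\mu,z\rangle-\int_0^T(\bar\mu_k,z)_\Omega\,dt-\|z(\cdot,T)\|_{L^2(\Omega)}^2.
\]

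Finally I would discard the nonpositive term $-\|z(\cdot,T)\|_{L^2(\Omega)}^2$ and reinsert $\psi$. For the measure term, the complementarity relation $\langle\bar\mu,\bar y-\psi\rangle=0$ from \eqref{3} gives $\langle\bar\mu,z\rangle=\langle\bar\mu,\bar y_k-\psi\rangle$. For the multiplier term I would split $-\int_0^T(\bar\mu_k,z)_\Omega\,dt=\int_0^T(\bar\mu_k,\psi-\bar y_k)_\Omega\,dt-\int_0^T(\bar\mu_k,\psi-\bar y)_\Omega\,dt$. Because $\bar\mu_k=(\rho_k(\bar y_k-\psi)+\mu_k)_+\ge0$ and $\bar y$ is feasible for \eqref{P}, that is $\psi-\bar y\ge0$, the last integral is nonnegative, so omitting it only enlarges the right-hand side and leaves exactly $\int_0^T(\bar\mu_k,\psi-\bar y_k)_\Omega\,dt+\langle\bar\mu,\bar y_k-\psi\rangle$, which is the asserted inequality.

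The main obstacle is the integration by parts in the second step. The optimal adjoint state $\bar p$ lives only in $L^1([0,T],W^{1,s}(\Omega))$ with measure-valued sources, so it is \emph{not} an admissible test function in the usual $L^2(0,T;H^1)$ weak formulation of the state equation, and the symbols $(\bar p,\cdot)_\Omega$, $(\bar p,\cdot)_\Sigma$ must be read in the transposition/duality sense associated with the measure-valued adjoint equation. What makes the argument legitimate is precisely that $z=\bar y_k-\bar y$ is continuous on $\overline\Omega_T$, so that $\langle\mu_{\Omega_T},z\rangle$ and $\langle\mu_{\Sigma_T},z\rangle$ are well defined and the very-weak adjoint identity may be tested with $z$; one must also check that the same identity holds for $\bar p_k$, whose source $\bar\mu_k$ lies only in $L^2(\Omega_T)$, and that the time integration by parts is valid in this reduced regularity, which is where the bulk of the rigorous work resides.
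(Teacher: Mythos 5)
Your proposal is correct and takes essentially the same route as the paper's proof: both combine the two variational inequalities tested against each other's optimal controls, transport the resulting $(\bar p-\bar p_k)$-pairings onto the multipliers via the state and adjoint equations with integration by parts, discard the nonpositive terminal term $-\|\bar y(\cdot,T)-\bar y_k(\cdot,T)\|^2_{L^2(\Omega)}$, and finish with the complementarity relation $\langle\bar\mu,\bar y-\psi\rangle=0$ together with the sign conditions $\bar\mu_k\ge 0$, $\bar y\le\psi$. Your closing remarks on the transposition sense of the pairings and the role of $z\in C(\overline\Omega_T)$ are a welcome point of rigor that the paper's computation leaves implicit, but they do not change the argument.
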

						\begin{proof}
	The following result can be derived from the definitions in \eqref{3} and \eqref{5}, together with the weak formulations of the state equation and the adjoint equation.
\begin{equation}
	\begin{aligned} 
		&\alpha \|\bar{u} - \bar{u}_k\|^2_{L^2(\Omega_T)} + \beta \|\bar{v} - \bar{v}_k\|^2_{L^2(\Sigma_T)} \\
		= &\int_0^T \big[ \alpha (\bar{u} - \bar{u}_k, \bar{u} - \bar{u}_k)_{\Omega} 
		+ \beta (\bar{v} - \bar{v}_k, \bar{v} - \bar{v}_k)_{\Sigma} \big] \, dt \\ 
		\leq &\int_0^T \big[ - (\bar{p} - \bar{p}_k, \bar{u} - \bar{u}_k)_{\Omega} 
		- (\bar{p} - \bar{p}_k, \bar{v} - \bar{v}_k)_{\Sigma} \big] \, dt \\
		= &\int_0^T \big[ - (\bar{p} - \bar{p}_k, \bar{y}_t + A\bar{y} - (\bar{y}_k)_t - A\bar{y}_k)_{\Omega} 
		- (\bar{p} - \bar{p}_k, \bar{v} - \bar{v}_k)_{\Sigma} \big] \, dt \\ 
		= &\int_0^T \big[ - (A(\bar{p} - \bar{p}_k), \bar{y} - \bar{y}_k)_{\Omega} 
		- (\bar{p} - \bar{p}_k, \bar{y}_t - (\bar{y}_k)_t)_{\Omega} 
		- (\mu_{\Sigma}, \bar{y} - \bar{y}_k)_{\Sigma} \big] \, dt \\ 
		= &\int_0^T \big[ - (\mu_{\Omega} - \bar{\mu}_k, \bar{y} - \bar{y}_k)_{\Omega} 
		- (\bar{p}_t - (\bar{p}_k)_t, \bar{y} - \bar{y}_k)_{\Omega} \\
		&\quad - (\bar{p} - \bar{p}_k, \bar{y}_t - (\bar{y}_k)_t)_{\Omega} 
		- (\mu_{\Sigma}, \bar{y} - \bar{y}_k)_{\Sigma} \big] \, dt \\ 
		= &\int_0^T - (\bar{\mu} - \bar{\mu}_k, \bar{y} - \bar{y}_k)_{\Omega} \, dt 
		- (\bar{p}(T) - \bar{p}_k(T), \bar{y}(T) - \bar{y}_k(T))_{\Omega} \\ 
		= &\int_0^T (\bar{\mu}_k - \bar{\mu}, \bar{y} - \bar{y}_k)_{\Omega} \, dt 
		- \|\bar{y}(T) - \bar{y}_k(T)\|^2_{L^2(\Omega)} \\ 
		\leq &\int_0^T (\bar{\mu}_k - \bar{\mu}, \bar{y} - \bar{y}_k)_{\Omega} \, dt. 
		\label{7}
	\end{aligned}
\end{equation}

\noindent
Here, the inequality in the second line is derived from inequalities \eqref{3} and \eqref{5}.

	The right side of the inequality can be split into two parts:\\
	\begin{equation}
	\begin{aligned}
	\int^T_0(\bar{\mu}_{k},\bar{y}-\bar{y}_k)_{\Omega}dt &=\int^T_0(\bar{\mu}_{k},\bar{y}-\psi)_{\Omega}dt+\int^T_0(\bar{\mu}_{k},\psi-\bar{y}_k)_{\Omega}dt \\
	&\leq \int^T_0(\bar{\mu}_{k},\psi-\bar{y}_k)_{\Omega}dt,
	\label{8}
\end{aligned}
\end{equation}
	and\\
		\begin{equation}
		\begin{aligned}
	-\langle \bar{\mu},\bar{y}-\bar{y}_k\rangle
	&=-\langle \bar{\mu},\bar{y}-\psi\rangle -\langle \bar{\mu},\psi-\bar{y}_k\rangle\\
	&=\langle \bar{\mu},\bar{y}_k-\psi\rangle,
	\label{9}
\end{aligned}
\end{equation}
	The following results can be obtained by combining \eqref{7}, \eqref{8} and \eqref{9} formulas\\
	$$\alpha \parallel \bar{u}-\bar{u}_k\parallel^2_{L^2(\Omega_T)}+\beta \parallel \bar{v}-\bar{v}_k \parallel^2_{L^2(\Sigma_T)}\leq  \int^T_0(\bar{\mu}_{k},\psi-\bar{y}_k)_{\Omega}dt+\langle \bar{\mu},\bar{y}_k-\psi\rangle.$$
\end{proof}
\begin{lemma}
Let $(\bar{y},\bar{u},\bar{v},\bar{\mu})$ and $(\bar{y}_k,\bar{u}_k,\bar{v}_k,\bar{\mu}_k)$ be given as the ones in Lemma 3.1. Then it holds\\
			\begin{equation}
		\begin{aligned}
	&\alpha \parallel \bar{u}-\bar{u}_k\parallel^2_{L^2(\Omega_T)}+\beta \parallel \bar{v}-\bar{v}_k \parallel^2_{L^2(\Sigma_T)}\\
	&\leq \parallel \bar{\mu}\parallel_{\mathcal{M}(\overline{\Omega}_T)}\parallel (\bar{y}_k-\psi)_+\parallel_{C(\overline{\Omega}_T)}+|\int^T_0(\bar{\mu}_{k},\psi-\bar{y}_k)_{\Omega}dt|.
\end{aligned}
\end{equation}
\end{lemma}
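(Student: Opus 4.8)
The plan is to start directly from the estimate established in Lemma 3.1, namely
\[
\alpha \parallel \bar{u}-\bar{u}_k\parallel^2_{L^2(\Omega_T)}+\beta \parallel \bar{v}-\bar{v}_k \parallel^2_{L^2(\Sigma_T)} \leq \int^T_0(\bar{\mu}_{k},\psi-\bar{y}_k)_{\Omega}\,dt+\langle \bar{\mu},\bar{y}_k-\psi \rangle,
\]
and to bound its right-hand side term by term. The first term needs no work: trivially $\int^T_0(\bar{\mu}_{k},\psi-\bar{y}_k)_{\Omega}\,dt \leq \left|\int^T_0(\bar{\mu}_{k},\psi-\bar{y}_k)_{\Omega}\,dt\right|$, which already supplies the second summand in the target inequality. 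So the entire content of the lemma lies in estimating the duality pairing $\langle \bar{\mu},\bar{y}_k-\psi \rangle$ against $\parallel \bar{\mu}\parallel_{\mathcal{M}(\overline{\Omega}_T)}\parallel (\bar{y}_k-\psi)_+\parallel_{C(\overline{\Omega}_T)}$.

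The key structural fact I would exploit is the sign condition $\bar{\mu}\geq 0$, which is part of the optimality system \eqref{3}. Writing the pairing as an integral against the nonnegative measure, $\langle \bar{\mu},\bar{y}_k-\psi \rangle = \int_{\overline{\Omega}_T}(\bar{y}_k-\psi)\,d\bar{\mu}$, I would first use the pointwise inequality $\bar{y}_k-\psi \leq (\bar{y}_k-\psi)_+$; since $\bar{\mu}$ is a positive measure this inequality is preserved under integration, giving $\int_{\overline{\Omega}_T}(\bar{y}_k-\psi)\,d\bar{\mu} \leq \int_{\overline{\Omega}_T}(\bar{y}_k-\psi)_+\,d\bar{\mu}$. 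Then, pulling out the supremum of the continuous integrand and recalling that for a nonnegative measure the total variation norm coincides with the total mass $\parallel \bar{\mu}\parallel_{\mathcal{M}(\overline{\Omega}_T)}=\bar{\mu}(\overline{\Omega}_T)$, I obtain
\[
\int_{\overline{\Omega}_T}(\bar{y}_k-\psi)_+\,d\bar{\mu} \leq \parallel (\bar{y}_k-\psi)_+\parallel_{C(\overline{\Omega}_T)}\,\bar{\mu}(\overline{\Omega}_T)=\parallel \bar{\mu}\parallel_{\mathcal{M}(\overline{\Omega}_T)}\parallel (\bar{y}_k-\psi)_+\parallel_{C(\overline{\Omega}_T)}.
\]
Adding the two bounds recovers exactly the claimed estimate.

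The proof is short, so there is no genuinely hard obstacle; the only point requiring care is the step $\langle \bar{\mu},\bar{y}_k-\psi\rangle\le\int(\bar{y}_k-\psi)_+\,d\bar{\mu}$, which relies essentially on $\bar{\mu}\geq 0$ (without the sign condition one would only have the weaker bound $\parallel \bar{\mu}\parallel_{\mathcal{M}}\parallel \bar{y}_k-\psi\parallel_{C}$, not involving the positive part). I would therefore make explicit that the nonnegativity of $\bar{\mu}$ from \eqref{3} is what lets the positive part $(\bar{y}_k-\psi)_+$ appear, since this refinement is precisely what makes the bound useful for the convergence analysis: as $\bar{y}_k$ approaches feasibility, $(\bar{y}_k-\psi)_+$ tends to zero in $C(\overline{\Omega}_T)$, driving the right-hand side down.
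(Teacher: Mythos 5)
Your proposal is correct and follows essentially the same route as the paper: the paper's proof likewise applies the estimate $\langle \bar{\mu},\bar{y}_k-\psi\rangle \leq \parallel \bar{\mu}\parallel_{\mathcal{M}(\overline{\Omega}_T)}\parallel (\bar{y}_k-\psi)_+\parallel_{C(\overline{\Omega}_T)}$ to the right-hand side of the preceding lemma and bounds the remaining integral by its absolute value. Your write-up merely makes explicit the justification (nonnegativity of $\bar{\mu}$ allowing the passage to the positive part) that the paper states without detail.
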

\begin{proof}
Apply the following estimates to lemma 3.4  \\
	$$\langle \bar{\mu},\bar{y}_k-\psi\rangle \leq \parallel \bar{\mu}\parallel_{M(\overline{\Omega}_T)}\parallel (\bar{y}_k-\psi)_+\parallel_{C(\overline{\Omega}_T)}.$$
\end{proof}
    From Lemma 2, an upper bound for the error between the control \( (\bar{u}_k, \bar{v}_k) \) at iteration \( k \) and the optimal control \( (\bar{u}, \bar{v}) \) is established in the \( L^2 \)-norm. Therefore when the quantity
	$$\parallel (\bar{y}_k-\psi)_+\parallel_{C(\overline{\Omega}_T)}+|\int^T_0(\bar{\mu}_{k},\psi-\bar{y}_k)_{\Omega}dt|$$
	tends to zero for $k\rightarrow0$, then $(\bar{u}_k,\bar{v}_k)$ converges strongly to $(\bar{u},\bar{v})$ at $L^2$-norm. When the condition \\
\[
\begin{aligned}
	&\parallel (\bar{y}_k-\psi)_+\parallel_{C(\overline{\Omega}_T)}+|\int^T_0(\bar{\mu}_{k},\psi-\bar{y}_k)_{\Omega}dt|\\
	&\leq \tau(\parallel (\bar{y}_{n}-\psi)_+\parallel_{C(\overline{\Omega}_T)}+|\int^T_0(\bar{\mu}_{n},\psi-\bar{y}_{n})_{\Omega}dt|
\end{aligned}
\]
      is satisfied with \( \tau \in (0,1) \), the penalty factor \( \rho_k \) selected at step \( k \) is considered appropriate. In this case, the penalty factor \( \rho_k \) remains unchanged, and the multiplier \( \mu_k \) is updated. Otherwise, \( \rho_k \) is updated while keeping the multiplier \( \mu_k \) fixed. If the above quantity becomes sufficiently small, the iteration process can be terminated.
	\subsection{The augmented Lagrange algorithm in detail}
     We now utilize the multiplier and penalty update rule, along with the termination condition derived in the previous section, to formulate the following algorithm.
\begin{algorithm}
	\caption{ }
	\begin{algorithmic}[1]
		\State \textbf{Input:} $\rho_1 > 0$, $\mu_1 \in L^2(\Omega_T)$ with $\mu_1 \geq 0$. Set $\gamma > 1$, $\tau \in (0,1)$, $R^+_0 \gg 1$, $\epsilon \geq 0$, $k = 1$, and $n = 1$.
		\Repeat
		\State \textbf{Step 1.} Solve $(P_k)$, and obtain $(\bar{y}_k, \bar{u}_k, \bar{v}_k, \bar{p}_k)$.
		\State \textbf{Step 2.} Set $\bar{\mu}_k := (\rho_k (\bar{y}_k - \psi) + \mu_k)_+$.
		\State \textbf{Step 3.} Compute $R_k := \|\bar{y}_k - \psi\|_{C(\overline{\Omega}_T)} + \left| \int_0^T (\bar{\mu}_{k}, \psi - \bar{y}_k) \, dt \right|$.
		\If{$R_k \leq \tau R^+_{n-1}$}
		\State \textbf{Step 4.} Set $\mu_{k+1} := \bar{\mu}_k$, $\rho_{k+1} := \rho_k$, and define $(y^+_n, u^+_n, v^+_n, p^+_n, \mu^+_n, R^+_n) := (\bar{y}_k, \bar{u}_k, \bar{v}_k, \bar{p}_k, \bar{\mu}_k, R_k)$. Set $n:= n + 1$.
		\Else
		\State \textbf{Step 5.} Set $\mu_{k+1} := \mu_k$, $\rho_{k+1} := \gamma \rho_k$.
		\EndIf
		\State \textbf{Step 6.} Update $k := k + 1$.
		\Until{$R^+_n \leq \epsilon$}.
	\end{algorithmic}
\end{algorithm}\\
     Before proving the convergence of the solution to Algorithm 2, we first present a lemma that demonstrates the iteration of Algorithm 2 can be terminated after a sufficient number of iterations. The following lemma is provided.
	\begin{lemma}
    Let \( R^+_n \) be generated by Algorithm 2. Then, as \( k \to \infty \), \( R^+_n \to 0 \), where \( n \) denotes the number of successful iterations and is related to \( k \).
		\end{lemma}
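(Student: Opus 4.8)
The plan is to argue through the dichotomy between infinitely many and finitely many successful steps. First observe that the recorded residuals are, by construction, geometrically contracting: Step~4 sets $R^+_n := R_k$ precisely under the guard $R_k \le \tau R^+_{n-1}$, so $R^+_n \le \tau R^+_{n-1}$ at every successful index, whence $R^+_n \le \tau^{\,n} R^+_0$. Consequently, if the number $n$ of successful iterations tends to infinity as $k \to \infty$, then $R^+_n \to 0$ and we are done, so the whole content of the lemma reduces to excluding a finite number of successes. Suppose, for contradiction, that there is a last successful index $k_0$, after which Step~5 is executed forever. Then the multiplier is frozen, $\mu_k \equiv \mu^\ast := \mu_{k_0+1} \ge 0$, the counter is frozen, $n \equiv n^\ast$, the comparison threshold $R^+_{n^\ast-1} > 0$ is fixed (the algorithm has not stopped, and $0 \le \epsilon$ would have triggered termination were it zero), while $\rho_k = \gamma^{\,k-k_0-1}\rho_{k_0+1} \to \infty$. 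I would obtain a contradiction by showing that along this tail $R_k \to 0$, since then $R_k \le \tau R^+_{n^\ast-1}$ for all large $k$ and the guard in Step~4 must eventually fire.

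To establish $R_k \to 0$ I would first set up uniform bounds and treat the first term of $R_k$. Because $\bar u_k \in \mathcal U_{ad}$ and $\bar v_k \in \mathcal V_{ad}$ are confined to the boxes fixed by $u_a,u_b \in L^r$ and $v_a,v_b \in L^s$, they are bounded in $L^r(\Omega_T)$ and $L^s(\Sigma_T)$; the a priori estimate \eqref{2} then bounds $\bar y_k = \mathcal S(\bar u_k,\bar v_k)$ in $\mathcal W(0,T;L^2,H^1)\cap C(\overline\Omega_T)$, and compactness of the solution map yields a subsequence $\bar y_k \to y^\ast$ strongly in $C(\overline\Omega_T)$. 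Next, comparing the optimal value of $(P_k)$ with the Slater point $(\hat y,\hat u,\hat v)$ of Assumption~2 (feasible for the subproblem, with $\hat y \le \psi-\sigma$) and using $(\rho_k(\hat y-\psi)+\mu^\ast)_+ \le (\mu^\ast-\rho_k\sigma)_+$ together with dominated convergence, I would get $\tfrac1{2\rho_k}\|\bar\mu_k\|_{L^2(\Omega_T)}^2 \le J(\hat y,\hat u,\hat v)+o(1)$. Combined with the pointwise bound $\rho_k(\bar y_k-\psi)_+ \le \bar\mu_k$ this gives $\|(\bar y_k-\psi)_+\|_{L^2(\Omega_T)} \to 0$; with the $C$-convergence this forces $y^\ast \le \psi$, so $\|(\bar y_k-\psi)_+\|_{C(\overline\Omega_T)} \to 0$, i.e. the first term of $R_k$ vanishes.

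For the complementarity term I would first produce a uniform multiplier bound: testing the adjoint system \eqref{5} against $\hat y-\bar y_k$ and using the variational inequalities exactly as in the computation \eqref{7}, together with $\psi-\hat y \ge \sigma>0$ and $\bar\mu_k \ge 0$, gives $\sigma\|\bar\mu_k\|_{L^1(\Omega_T)} \le \int_{\Omega_T}\bar\mu_k(\psi-\hat y) \le C$ uniformly in $k$. Then, splitting $\int_0^T(\bar\mu_k,\psi-\bar y_k)_\Omega\,dt$ over $\{\bar y_k \le \psi\}$ and $\{\bar y_k > \psi\}$, the feasible part is bounded by $\rho_k^{-1}\|\mu^\ast\|_{L^2(\Omega_T)}^2 \to 0$, while the violation part has magnitude $\rho_k\|(\bar y_k-\psi)_+\|_{L^2(\Omega_T)}^2 + o(1)$, and the estimate $\rho_k\|(\bar y_k-\psi)_+\|_{L^2(\Omega_T)}^2 \le \|(\bar y_k-\psi)_+\|_{C(\overline\Omega_T)}\,\|\bar\mu_k\|_{L^1(\Omega_T)}$ lets the $C$-convergence and the uniform $L^1$ bound drive it to zero. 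Hence both terms of $R_k$ tend to zero, the contradiction follows, so $n \to \infty$ and $R^+_n \le \tau^{\,n} R^+_0 \to 0$.

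The step I expect to be the main obstacle is the uniform bound $\|\bar\mu_k\|_{L^1(\Omega_T)} \le C$: it is the genuine place where the Slater condition enters, and controlling the pairing $\int_{\Omega_T}\bar\mu_k(\bar y_k-\hat y)$ requires a careful integration by parts in the adjoint equation and use of the variational inequalities, with all boundary and terminal contributions absorbed into the uniform bounds of the previous paragraph. A secondary delicate point is the compactness upgrade from a mere $C(\overline\Omega_T)$-bound to strong $C$-convergence, which relies on a uniform H\"older-type a priori estimate for the parabolic problem underlying \eqref{2} rather than on continuity alone.
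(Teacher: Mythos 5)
Your proof is correct, and it shares the paper's outer skeleton: the dichotomy between infinitely many successful steps (where $R^+_n \le \tau^n R^+_0 \to 0$ is immediate) and a frozen tail with $\rho_k \to \infty$, which is ruled out by showing $R_k \to 0$, contradicting $R_k > \tau R^+_{n^\ast-1} > 0$ --- a positivity you justify more explicitly than the paper does. Inside Case 2, however, your argument is genuinely different. The paper goes through the KKT system \eqref{3} of the original problem: Slater enters only via the existence of the measure multiplier $\bar{\mu}$, and the inequality \eqref{7} of Lemma 3.3, the identity \eqref{12}, and boundedness of $\rho_k^{-1}\|\bar{\mu}_k\|^2_{L^2(\Omega_T)}$ (extracted from \eqref{13}) are combined in \eqref{50} to give strong $L^2$ convergence of the controls; dominated convergence lifts this to $L^r \times L^s$, and then mere \emph{continuity} of $\mathcal{S}$ (Theorem 2.2) yields $\bar{y}_k \to \bar{y} \le \psi$ in $C(\overline{\Omega}_T)$, after which \eqref{16}--\eqref{17} settle the complementarity term. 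You bypass $\bar{\mu}$ and control convergence altogether: you bound $\rho_k^{-1}\|\bar{\mu}_k\|^2_{L^2(\Omega_T)}$ by comparing the value of $(P_k)$ at its minimizer with its value at the Slater point (cleaner and more elementary than the paper's detour through \eqref{13}), deduce $\|(\bar{y}_k-\psi)_+\|_{L^2(\Omega_T)} \to 0$ from $\rho_k(\bar{y}_k-\psi)_+ \le \bar{\mu}_k$, and treat complementarity by a uniform $L^1$ bound on $\bar{\mu}_k$ (the tail analogue of the paper's Lemma 3.8) plus your feasible/violation splitting; these estimates check out. The trade-off is the one you flag yourself: your upgrade from $L^2$-smallness to $\|(\bar{y}_k-\psi)_+\|_{C(\overline{\Omega}_T)} \to 0$ requires \emph{compactness} of $\mathcal{S}$ into $C(\overline{\Omega}_T)$, i.e.\ uniform H\"older-type parabolic estimates, which is strictly more than Theorem 2.2 states, so your proof is complete only modulo that standard but external citation. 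It is worth noting that the paper has the mirror-image soft spot: its step \eqref{15} infers $\limsup_{k}\langle \bar{y}_k-\psi,\bar{\mu}\rangle \le 0$ from smallness of $(\mu/\rho_k+\bar{y}_k-\psi)_+$ in $L^2$, and pairing an $L^2$-small but conceivably sup-norm-large violation against a possibly singular Radon measure needs exactly the kind of uniform control you invoke; so your route makes explicit, and in effect repairs, what the paper glosses over, at the price of importing parabolic H\"older regularity, while the paper's route buys the control convergence as a by-product (later reused in Theorem 3.6) using only the continuity of the solution map.
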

		\begin{proof}
To prove the above lemma, we consider two cases : $n \rightarrow \infty$ and $n$ stays bounded.\\
	Case 1 $(n \rightarrow \infty)$: By the definition of the algorithm:\\
	$$0 \leqslant R^+_{n} \leqslant \tau^nR^+_{0}, $$
	Passing to the limit $n \rightarrow \infty$ and $\tau \in (0,1)$ yields
	$$R^+_{n} \rightarrow 0.$$
	Case 2 ($n$ stays bounded): We assume that the maximum value of $n$ is $N$. From the algorithm we know exist a constant $M$, $R_k >\tau R^+_{N}$, when $k > M$. Set $\mu_M=\mu$, then when $k > M$, the solution $(\bar{y}_k,\bar{u}_k,\bar{v}_k,\bar{p}_k)$ of problem $(P_k)$ satisfies the following necessary conditions\\
			\begin{equation}
	\begin{aligned}
		(\bar{y}_k)_t+A\bar{y}_k&=\bar{u}_k,  \; &\rm{in} \; \Omega_T;\\
		\partial_{\nu_A}\bar{y}_k&=\bar{v}_k, \; &\rm{on} \; \Sigma_T;\\
		\bar{y}_k(\cdot,0)&=y_0, \; &\rm{in} \;\;\; \overline{\Omega};\\
		-(\bar{p}_k)_t+A\bar{p}_k&=\bar{\mu}_k,  \; &\rm{in} \; \Omega_T;\\
		\partial_{\nu_A}\bar{p}_k&=0, \; &\rm{on} \; \Sigma_T;\\
		\bar{p}_k(\cdot,T)&=\bar{y}_k(\cdot,T)-y_d, \; &\rm{in} \;\;\; \overline{\Omega};\\
		\int_{0}^{T}(\bar{p}_k+\alpha\bar{u}_{k},u-\bar{u}_{k})_{\Omega}dt&\geq0, \;\;\; &\forall u \in U_{ad};\\
		\int_{0}^{T}(\bar{p}_k+\alpha\bar{v}_{k},v-\bar{v}_{k})_{\Sigma}dt&\geq0, \;\;\; &\forall v \in V_{ad};\\
		\bar{\mu}_k&:=(\rho_k(\bar{y}_k-\psi)+\mu)_+.
	\end{aligned}
	\label{11}
\end{equation}\\
So first we have to prove $\lim\limits_{k\rightarrow \infty}\parallel(\bar{y}_k-\psi)_+\parallel_{C(\overline{\Omega}_T)}=0$. By imitating the proof of \cite{Ito2003S,Karl2018A}, we can get\\
			\begin{equation}
	\begin{aligned}
    \int^T_0(\bar{\mu}_k,\psi-\bar{y}_k)_{\Omega}dt&=-{\frac{1}{\rho_k}}  \int^T_0(\bar{\mu}_k,\rho_k(\bar{y}_k-\psi)+\mu-\mu)_{\Omega}dt\\
     &= -{\frac{1}{\rho_k}}\parallel \bar{\mu}_k\parallel^2_{L^2(\Omega_T)}+{\frac{1}{\rho_k}}  \int^T_0(\bar{\mu}_k,\mu)_{\Omega}dt \\
     &\leqslant -{\frac{1}{2\rho_k}}\parallel \bar{\mu}_k\parallel^2_{L^2(\Omega_T)}+{\frac{1}{2\rho_k}}\parallel \mu \parallel^2_{L^2(\Omega_T)}.
     	\end{aligned}
     \label{12}
 \end{equation}\\
 From \eqref{12}, Lemma 3.3 \eqref{7} and Theorem 2.2 \eqref{2}, we can get\\
 			\begin{equation}
 	\begin{aligned}
     &\alpha \parallel \bar{u}-\bar{u}_k\parallel^2_{L^2(\Omega_T)}+\beta \parallel \bar{v}-\bar{v}_k \parallel^2_{L^2(\Sigma_T)}\\
      &\leqslant \int^T_0(\bar{\mu}_k,\psi-\bar{y}_k)_{\Omega}dt - \langle \bar{\mu},\bar{y}-\bar{y}_k\rangle\\
     & \leqslant -{\frac{1}{2\rho_k}}\parallel \bar{\mu}_k\parallel^2_{L^2(\Omega_T)}+{\frac{1}{2\rho_k}}\parallel \mu \parallel^2_{L^2(\Omega_T)} +\parallel \bar{\mu}\parallel_{\mathcal{M}(\overline{\Omega}_T)}\parallel \bar{y}-\bar{y}_k\parallel_{C(\overline{\Omega}_T)}\\
     & \leqslant -{\frac{1}{2\rho_k}}\parallel \bar{\mu}_k\parallel^2_{L^2(\Omega_T)}+{\frac{1}{2\rho_k}}\parallel \mu \parallel^2_{L^2(\Omega_T)} \\
     &\;\;\;\;+ C_\infty\parallel \bar{\mu}\parallel_{\mathcal{M}(\overline{\Omega}_T)}( \parallel \bar{u}-\bar{u}_k\parallel_{L^r(\Omega_T)}+ \parallel \bar{v}-\bar{v}_k\parallel_{L^s(\Sigma_T)}).
	\end{aligned}
\label{13}
\end{equation}\\
Because of the boundedness of the control regions $\mathcal{U}_{ad}$ and $\mathcal{V}_{ad}$, we know in \eqref{13} that ${\frac{1}{2\rho_k}}\parallel \bar{\mu}_k\parallel^2_{L^2(\Omega_T)}$ is also bounded, where ${\frac{1}{2\rho_k}}\parallel \bar{\mu}_k\parallel^2_{L^2(\Omega_T)}$ can be expressed as\\
 			\begin{equation}
	\begin{aligned}
      {\frac{1}{2\rho_k}}\parallel \bar{\mu}_k\parallel^2_{L^2(\Omega_T)} =\frac{\rho_k}{2}\parallel (\frac{\mu}{\rho_k}+\bar{y}_k-\psi)_+\parallel^2_{L^2(\Omega_T)}.
    \end{aligned}
      \label{14}
      \end{equation}\\
Because $\rho_k \to \infty$, it follows that
\[
\limsup_{k \to \infty} \left( \frac{\mu}{\rho_k} + \bar{y}_k - \psi \right)_+ = 0,
\]
which implies
\[
\limsup_{k \to \infty} \bar{y}_k \leq \psi.
\]
This leads to the inequality
\begin{equation}
	\limsup_{k \to \infty} \langle \bar{y}_k - \psi, \bar{\mu} \rangle \leq 0.
	\label{15}
\end{equation}
Using \eqref{12} and \eqref{15}, along with \eqref{7} from Lemma 3.3, we obtain
\begin{equation}
	\begin{aligned}
		&\limsup_{k \to \infty} \big( \alpha \| \bar{u} - \bar{u}_k \|^2_{L^2(\Omega_T)} + \beta \| \bar{v} - \bar{v}_k \|^2_{L^2(\Sigma_T)} \big) \\
		&\leq \limsup_{k \to \infty} \Big[ \int_0^T (\bar{\mu}_k, \psi - \bar{y}_k)_{\Omega} \, dt 
		- \langle \bar{\mu}, \bar{y} - \psi \rangle 
		+ \langle \bar{y}_k - \psi, \bar{\mu} \rangle \Big] \\
		&\leq \limsup_{k \to \infty} \Big[ \frac{1}{2\rho_k} \| \mu \|^2_{L^2(\Omega_T)} 
		+ \langle \bar{y}_k - \psi, \bar{\mu} \rangle \Big] \\
		&\leq 0.
	\end{aligned}
	\label{50}
\end{equation}
Clearly,  
\[
\liminf\limits_{k \to \infty} \big( \alpha \| \bar{u} - \bar{u}_k \|^2_{L^2(\Omega_T)} + \beta \| \bar{v} - \bar{v}_k \|^2_{L^2(\Sigma_T)} \big) \geq 0.
\]  
This implies that \(\bar{u}_k \to \bar{u}\) in \(L^2(\Omega_T)\) and \(\bar{v}_k \to \bar{v}\) in \(L^2(\Sigma_T)\). Since $(\bar{u}_k,\bar{v}_k)$ converges strongly to $(\bar{u},\bar{v})$ in $L^2(\Omega_T)\times L^2(\Sigma_T)$, it is possible to extract a sub-sequence $(\bar{u}_{k'},\bar{v}_{k'})$, which converges to $\bar{u}$ almost everywhere. And due to the boundedness of $u_{k'}$ in $L^r(\Omega_T)$ and $u_{k'}$ in $L^s(\Sigma_T)$, we can find the control function $u_b-u_a$, $v_b-v_a$ such that\\
$$|\bar{u}_{k'}-\bar{u}|\leq u_b-u_a.$$
$$|\bar{v}_{k'}-\bar{u}|\leq v_b-v_a.$$
By the dominated convergence theorem, we get the sub-sequence $\bar{u}_{k'}$ converges strongly to $\bar{u}$ in $L^r(\Omega_T)$,  $\bar{v}_{k'}$ converges strongly to $\bar{v}$ in $L^s(\Sigma_T)$. As the limit is independent of the taken sub-sequence, we obtain convergence of the whole sequences $(\bar{u}_k,\bar{v}_k)$.\\
Moreover we can get $\bar{y}_k \to \bar{y}$ in $C(\overline{\Omega}_T) \cap W(0,T;L^2(\Omega),H^1(\Omega))$ by Theorem 2.2. Therefore\\
\begin{equation}
	\begin{aligned}
   \lim\limits_{k \to \infty}\parallel(\bar{y}_k-\psi)_+\parallel_{C(\overline{\Omega}_T)}=0
	\end{aligned}
	\label{51}
\end{equation}
      Next we will prove $\lim\limits_ {k\rightarrow \infty}{\int^T_0}(\bar{\mu}_k,\psi-\bar{y}_k) dt=0$. We can use \eqref{12} to estimate\\
      $$\int^T_0(\bar{\mu}_k,\psi-\bar{y}_k)_{\Omega}dt \leqslant -{\frac{1}{2\rho_k}}\parallel \bar{\mu}_k\parallel^2_{L^2(\Omega_T)}+{\frac{1}{2\rho_k}}\parallel \mu \parallel^2_{L^2(\Omega_T)} \leqslant {\frac{1}{2\rho_k}}\parallel \mu \parallel^2_{L^2(\Omega_T)}.$$
      which proves \\
       			\begin{equation}
      	\begin{aligned}
      \limsup_{k \rightarrow \infty}\int^T_0(\bar{\mu}_k,\psi-\bar{y}_k)_{\Omega}dt \leqslant 0.
      	\end{aligned}
      	\label{16}
      		\end{equation}\\
      		From Lemma 3.3 we get
      $$\int^T_0(\bar{\mu}_k,\psi-\bar{y}_k)dt \geqslant \alpha \parallel \bar{u}-\bar{u}_k\parallel^2_{L^2(\Omega_T)}+\beta \parallel \bar{v}-\bar{v}_k \parallel^2_{L^2(\Sigma_T)}+\langle \bar{\mu},\psi-\bar{y}_k\rangle,$$
      which leads with $\bar{\mu}\geq0$ and $\psi-\bar{y}_k\geq0$ to
        			\begin{equation}
      	\begin{aligned}
      \liminf_{k\rightarrow \infty}\int^T_0(\bar{\mu}_k,\psi-\bar{y}_k)_{\Omega}dt \geqslant 0.
            	\end{aligned}
      \label{17}
  \end{equation}\\
  The inequalities \eqref{16} and \eqref{17} yields
          			\begin{equation}
  	\begin{aligned}
      \lim_{k\rightarrow \infty}\int_{0}^{T}(\bar{\mu}_k,\psi-\bar{y}_k)_{\Omega}dt=0.
                  	\end{aligned}
      \label{18}
  \end{equation}\\
  Combining \eqref{51} and \eqref{18} we have\\
      $$\lim_{k\rightarrow \infty}R_k=\lim_{k\rightarrow \infty}(\parallel(\bar{y}_k-\psi)_+\parallel_{C(\overline{\Omega}_T)}+|\int^T_0(\bar{\mu}_{k},\psi-\bar{y}_k)_{\Omega}dt|)=0.$$
      This conclusion is inconsistent with $R_k<\tau R^+_N$, therefore Case 2 is not valid.
      \end{proof}
     \subsection{Convergence of algorithm}
     In this section, we first prove the convergence of $(y^+_n,u^+_n,v^+_n)$, given by the following theorem.
     \begin{theorem}
As $k\to \infty$ we have for the sequence $(y^+_n,u^+_n,v^+_n)$ generated by  Algorithm 2\\
\[
(y^+_n, u^+_n, v^+_n) \to (\bar{y}, \bar{u}, \bar{v}) 
\]
\[
\rm{in} \; \it \big(\mathcal{W}(0,T;L^2(\Omega),H^1(\Omega)) \cap C(\overline{\Omega}_T)\big) \times L^r(\Omega_T) \times L^s(\Sigma_T).
\]

	  \end{theorem}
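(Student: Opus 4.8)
The plan is to identify the successful iterates $(y^+_n,u^+_n,v^+_n,\mu^+_n,R^+_n)$ recorded in Step~4 of Algorithm~2 as a relabelling of a subsequence of the sub-problem solutions $(\bar{y}_k,\bar{u}_k,\bar{v}_k,\bar{\mu}_k)$, and then to push the residual decay of Lemma~3.5 through the a priori estimate of Lemma~3.4 to obtain control convergence, before transferring this to the state via the continuity of the control-to-state map $\mathcal{S}$. Since each successful iterate solves the optimality system \eqref{5}, it inherits all of the estimates proved in Lemmas~3.3 and~3.4 with $(\bar{y}_k,\bar{u}_k,\bar{v}_k,\bar{\mu}_k)$ replaced by $(y^+_n,u^+_n,v^+_n,\mu^+_n)$, and $R^+_n$ is exactly the residual
$$R^+_n=\|(y^+_n-\psi)_+\|_{C(\overline{\Omega}_T)}+\Big|\int_0^T(\mu^+_n,\psi-y^+_n)_{\Omega}\,dt\Big|.$$

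First I would invoke Lemma~3.5 to get $R^+_n\to 0$ as $k\to\infty$; since both summands defining $R^+_n$ are non-negative, each of them tends to zero separately. Applying Lemma~3.4 to the successful iterate gives
$$\alpha\|\bar{u}-u^+_n\|^2_{L^2(\Omega_T)}+\beta\|\bar{v}-v^+_n\|^2_{L^2(\Sigma_T)}\leq \|\bar{\mu}\|_{\mathcal{M}(\overline{\Omega}_T)}\,\|(y^+_n-\psi)_+\|_{C(\overline{\Omega}_T)}+\Big|\int_0^T(\mu^+_n,\psi-y^+_n)_{\Omega}\,dt\Big|.$$
Because $\|\bar{\mu}\|_{\mathcal{M}(\overline{\Omega}_T)}$ is a fixed finite constant attached to the optimal solution, the right-hand side is bounded by $(\|\bar{\mu}\|_{\mathcal{M}(\overline{\Omega}_T)}+1)R^+_n$, which tends to zero. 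Hence $u^+_n\to\bar{u}$ in $L^2(\Omega_T)$ and $v^+_n\to\bar{v}$ in $L^2(\Sigma_T)$.

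Next I would upgrade this to convergence in $L^r(\Omega_T)$ and $L^s(\Sigma_T)$, repeating the argument used to close Case~2 of Lemma~3.5. From the $L^2$-convergence, extract a subsequence converging almost everywhere; since $u^+_n\in\mathcal{U}_{ad}$ and $v^+_n\in\mathcal{V}_{ad}$, one has the pointwise bounds $|u^+_n-\bar{u}|\leq u_b-u_a$ and $|v^+_n-\bar{v}|\leq v_b-v_a$, whose right-hand sides lie in $L^r(\Omega_T)$ and $L^s(\Sigma_T)$ by Assumption~1. The dominated convergence theorem then yields strong convergence of the subsequence in $L^r$ and $L^s$, and since the limit $(\bar{u},\bar{v})$ is independent of the subsequence, the full sequences converge. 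Finally, the continuity of $\mathcal{S}:L^r(\Omega_T)\times L^s(\Sigma_T)\to\mathcal{W}(0,T;L^2(\Omega),H^1(\Omega))\cap C(\overline{\Omega}_T)$ noted after Theorem~2.2 gives $y^+_n=\mathcal{S}(u^+_n,v^+_n)\to\mathcal{S}(\bar{u},\bar{v})=\bar{y}$ in the required topology.

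The main obstacle is the passage from the $L^2$ control estimate to convergence in the stronger $L^r$/$L^s$ norms demanded by the statement, since Lemma~3.4 only controls the $L^2$ distance. This is handled by the uniform boundedness of $\mathcal{U}_{ad}$ and $\mathcal{V}_{ad}$ together with dominated convergence, but the domination argument is valid only along an almost-everywhere convergent subsequence, so one must additionally invoke uniqueness of the limit to recover convergence of the whole sequence. A secondary point requiring care is to confirm that the residual $R^+_n$ stored in Step~3 of Algorithm~2 coincides with the right-hand side of Lemma~3.4 (up to the harmless factor $\|\bar{\mu}\|_{\mathcal{M}(\overline{\Omega}_T)}$ on the first term), so that $R^+_n\to 0$ feeds directly into the control estimate.
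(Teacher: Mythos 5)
Your proposal is correct and follows essentially the same route as the paper's own proof: Lemma~3.5 gives $R^+_n \to 0$, the estimate of Lemma~3.4 applied to the successful iterates yields $L^2$-convergence of the controls, the boundedness of $\mathcal{U}_{ad}\times\mathcal{V}_{ad}$ plus dominated convergence along an a.e.\ convergent subsequence (with uniqueness of the limit) upgrades this to $L^r(\Omega_T)\times L^s(\Sigma_T)$, and Theorem~2.2 transfers the convergence to the state. The only cosmetic differences are that you spell out the constant $(\|\bar{\mu}\|_{\mathcal{M}(\overline{\Omega}_T)}+1)$ and the subsequence-plus-uniqueness step explicitly, which the paper handles by referring back to the argument inside the proof of Lemma~3.5.
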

	  \begin{proof}
	Due to Lemma 3.5, we know that when $k \rightarrow \infty$  get \\
	        			\begin{equation}
		\begin{aligned}
	\lim_{k\rightarrow \infty}R^+_n=\lim_{k\rightarrow \infty}(\parallel(y^+_n-\psi)_+\parallel_{C(\overline{\Omega}_T)}+|\int^T_0(\mu^+_{n},\psi-y^+_n)_{\Omega}dt|)=0.
	                  	\end{aligned}
	\label{19}
\end{equation}\\
	From Lemma 3.3, the following inequality holds\\
	\[
	\begin{aligned}
	&\alpha \parallel \bar{u}-u^+_n\parallel^2_{L^2(\Omega_T)}+\beta \parallel \bar{v}-v^+_n \parallel^2_{L^2(\Sigma_T)}\\
	&\leq \parallel \bar{\mu}\parallel_{M_{(\bar{\Omega}_T)}}\parallel (y^+_n-\psi)_+\parallel_{C_{(\bar{\Omega}_T)}}+|(\mu^+_n,\psi-y^+_n)|.
		\end{aligned}
		\]
	Using \eqref{19} from above we conclude\\
	$$0 \leq \lim_{k\rightarrow \infty} \alpha \parallel \bar{u}-u^+_n\parallel^2_{L^2(\Omega_T)}+\beta \parallel \bar{v}-v^+_n \parallel^2_{L^2(\Sigma_T)} \leq 0,$$
    Therefore, $(u^+_n, v^+_n) \to (\bar{u}, \bar{v})$ in $L^2(\Omega_T) \times L^2(\Sigma_T)$. Moreover, due to the boundedness of $(u^+_{n_k}, v^+_{n_k})$ in $L^r(\Omega_T) \times L^s(\Sigma_T)$, and similar to the proof in Lemma 3.3, we conclude that $(u^+_{n_k}, v^+_{n_k})$ converges strongly to $(\bar{u}, \bar{v})$ in $L^r(\Omega_T) \times L^s(\Sigma_T)$. \\
    Similarly, by Theorem 2.2, we obtain $y^+_n \to \bar{y}$ in $C(\overline{\Omega}_T) \cap \mathcal{W}(0,T;L^2(\Omega), H^1(\Omega))$.
		  \end{proof}
Having established the convergence of the internal control \( u^+_n \), the edge control \( v^+_n \), and the state \( y^+_n \) generated by Algorithm 2, we now proceed to analyze the convergence of the dual variables. Here, \( \mu^+_n \) denotes the approximation of the Lagrange multipliers, and \( p^+_n \) represents the approximation of the adjoint quantities.
To ensure the convergence of the dual variables, it is first necessary to establish the boundedness of \( \mu^+_n \) in \( L^1(\Omega_T) \) and the boundedness of \( p^+_n \) in \( L^1([0,T], W^{1,s}(\Omega)) \), where \( s \in (1, N/(N-1)) \). The required lemma to demonstrate these boundedness properties is provided below.
	\begin{lemma}
$y^+_n$ and $\mu^+_n$ are generated by algorithm 2. Then it holds\\
		        			\begin{equation}
		\begin{aligned}
	&|\int^T_0(\mu^+_n,\psi-y^+_n)_{\Omega}dt| \\
	&\leq \tau^{n-1}(\parallel(y^+_1-\psi)_+\parallel_{C(\overline{\Omega}_T)}+\parallel \mu^+_1\parallel_{L^2(\Omega_T)}\parallel y^+_1-\psi\parallel_{L^2(\Omega_T)}).
		                  	\end{aligned}
	\label{20}
\end{equation}
\end{lemma}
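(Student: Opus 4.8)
The plan is to read the estimate directly off the acceptance test that governs the successful steps of Algorithm 2, combined with an elementary Cauchy--Schwarz bound. The key structural observation is that the index $n$ is advanced only at a successful step: whenever the test $R_k \leq \tau R^+_{n-1}$ in Step 4 is passed at some iteration $k$, the quantities are relabelled as $(y^+_n,u^+_n,v^+_n,p^+_n,\mu^+_n,R^+_n) := (\bar y_k,\bar u_k,\bar v_k,\bar p_k,\bar\mu_k,R_k)$. In particular $R^+_n = R_k \leq \tau R^+_{n-1}$, so that the one-step contraction
\[
R^+_n \leq \tau R^+_{n-1}
\]
holds for every successful index $n \geq 1$.

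First I would iterate this contraction down to the base index. Applying it $n-1$ times gives the geometric decay
\[
R^+_n \leq \tau R^+_{n-1} \leq \tau^2 R^+_{n-2} \leq \cdots \leq \tau^{\,n-1} R^+_1,
\]
where it is important that the telescoping stops at $R^+_1$ (and not at the artificial initial value $R^+_0$), which is exactly what the stated right-hand side reflects. Next I would isolate the integral term: by the definition used in Step 3, $R^+_n = \|(y^+_n-\psi)_+\|_{C(\overline{\Omega}_T)} + \big|\int_0^T (\mu^+_n,\psi-y^+_n)_\Omega\,dt\big|$ is a sum of two nonnegative quantities, so the integral term alone is dominated by $R^+_n$:
\[
\Big|\int_0^T (\mu^+_n,\psi-y^+_n)_\Omega\,dt\Big| \leq R^+_n \leq \tau^{\,n-1} R^+_1.
\]

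Finally I would bound $R^+_1$ in the form appearing on the right-hand side. The first summand $\|(y^+_1-\psi)_+\|_{C(\overline{\Omega}_T)}$ is already of the required shape, while for the second summand I would apply the Cauchy--Schwarz inequality over $\Omega_T$,
\[
\Big|\int_0^T (\mu^+_1,\psi-y^+_1)_\Omega\,dt\Big| = \Big|\int_{\Omega_T} \mu^+_1\,(\psi-y^+_1)\,dx\,dt\Big| \leq \|\mu^+_1\|_{L^2(\Omega_T)}\,\|y^+_1-\psi\|_{L^2(\Omega_T)}.
\]
Substituting this bound for $R^+_1$ into the previous display yields the claimed inequality. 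There is no genuine obstacle here; the argument is entirely elementary, and the only point requiring a little care is the index bookkeeping, ensuring the contraction is iterated exactly $n-1$ times and terminates at the first accepted iterate $R^+_1$ rather than at the initialization $R^+_0$.
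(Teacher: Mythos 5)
Your proposal is correct and follows essentially the same route as the paper's own proof: iterate the acceptance test to obtain $R^+_n \leq \tau^{n-1} R^+_1$, drop the nonnegative sup-norm term to isolate the integral, and apply Cauchy--Schwarz to the integral term in $R^+_1$. The only difference is that you spell out the index bookkeeping (that $n$ advances only at successful steps and the telescoping stops at $R^+_1$), which the paper compresses into the phrase ``by definition of Algorithm 2.''
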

\begin{proof}
	By definition of Algorithm 2, we get \\
	$$R^+_n \leq \tau^{n-1}R^+_1,$$
	which can be written as\\
	\[
	\begin{aligned}
	&|\int^T_0(\mu^+_n,\psi-y^+_n)_{\Omega}dt| \leq \parallel(y^+_n-\psi)_+\parallel_{C(\overline{\Omega}_T)}+|\int^T_0(\mu^+_{n},\psi-y^+_n)_{\Omega}dt|\\
	&\leq  \tau^{n-1}(\parallel(y^+_1-\psi)_+\parallel_{C(\overline{\Omega}_T)}+|\int^T_0(\mu^+_{1},\psi-y^+_1)_{\Omega}dt|)\\
	&\leq \tau^{n-1}(\parallel(y^+_1-\psi)_+\parallel_{C(\overline{\Omega}_T)}+\parallel \mu^+_1\parallel_{L^2(\Omega_T)}\parallel y^+_1-\psi\parallel_{L^2(\Omega_T)}).
\end{aligned}
\]
\end{proof}
   We now proceed to demonstrate the boundedness of \( \mu^+_n \) and \( p^+_n \).
	\begin{lemma}
Let Assumption 2 be fulfilled. $p^+_n$ and $\mu^+_n$ are generated by algorithm 2, i.e.,there is a constant $C>0$ such that for all $n$ it holds \\
	$$\parallel p^+_n\parallel_{L^1([0,T],W^{1,s}(\Omega))} +\parallel \mu^+_n \parallel_{L^1(\Omega_T)} \leq C.$$
\end{lemma}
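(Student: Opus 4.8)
The plan is to prove the two uniform bounds in sequence: first $\|\mu^+_n\|_{L^1(\Omega_T)}\le C$, and then the bound on $p^+_n$ deduced directly from the adjoint a priori estimate of Theorem 2.5. The multiplier bound rests entirely on the Slater point of Assumption 2 together with the sign condition $\mu^+_n=(\rho_k(y^+_n-\psi)+\mu_k)_+\ge 0$ built into Algorithm 2. The first move is to test the nonnegative multiplier against the Slater gap: with $\hat y=\mathcal S(\hat u,\hat v)$ and $\psi-\hat y\ge\sigma>0$, nonnegativity yields
\[
\sigma\,\|\mu^+_n\|_{L^1(\Omega_T)} \le \int_0^T(\mu^+_n,\psi-\hat y)_\Omega\,dt = \int_0^T(\mu^+_n,\psi-y^+_n)_\Omega\,dt + \int_0^T(\mu^+_n,y^+_n-\hat y)_\Omega\,dt .
\]
The first term on the right is uniformly bounded (it even tends to $0$) by Lemma 3.7, so the task reduces to bounding the cross term $\int_0^T(\mu^+_n,y^+_n-\hat y)_\Omega\,dt$ independently of $n$.

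To control this cross term I would trade the multiplier for the adjoint and then eliminate the adjoint via the variational inequalities. Setting $w:=\hat y-y^+_n$, the difference $w$ solves the state equation with data $(\hat u-u^+_n,\hat v-v^+_n)$ and $w(\cdot,0)=0$. Pairing the state equation for $w$ with $p^+_n$ and the adjoint equation for $p^+_n$ (right-hand side $\mu^+_n$, vanishing co-normal derivative, terminal value $y^+_n(T)-y_d$) with $w$, integrating by parts in time, and using the symmetry of $A$ so that the elliptic forms cancel, I obtain
\[
\int_0^T(\mu^+_n,y^+_n-\hat y)_\Omega\,dt = -\int_0^T(p^+_n,\hat u-u^+_n)_\Omega\,dt - \int_0^T(p^+_n,\hat v-v^+_n)_\Sigma\,dt + (\hat y(T)-y^+_n(T),\,y^+_n(T)-y_d)_\Omega .
\]
Inserting $u=\hat u$ and $v=\hat v$ into the variational inequalities of \eqref{5} gives $-\int_0^T(p^+_n,\hat u-u^+_n)_\Omega\,dt\le\alpha\int_0^T(u^+_n,\hat u-u^+_n)_\Omega\,dt$ and the analogous bound on $\Sigma_T$ with $\beta$. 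All remaining quantities are uniformly bounded: $\hat u,u^+_n\in\mathcal U_{ad}$ and $\hat v,v^+_n\in\mathcal V_{ad}$ are bounded in $L^2$, while $y^+_n(T)$ is bounded in $L^2(\Omega)$ by the stability estimate \eqref{2} of Theorem 2.2. Hence the cross term is $\le C$, and therefore $\|\mu^+_n\|_{L^1(\Omega_T)}\le C/\sigma$ uniformly in $n$.

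For the adjoint bound I would regard the nonnegative function $\mu^+_n\in L^1(\Omega_T)$ as a regular Borel measure on $\overline\Omega_T$ with $\|\mu^+_n\|_{\mathcal M(\overline\Omega_T)}=\|\mu^+_n\|_{L^1(\Omega_T)}$, and apply the a priori estimate of Theorem 2.5 with terminal data $y^+_n(T)-y_d$. Since both $\|\mu^+_n\|_{\mathcal M(\overline\Omega_T)}$ (by the previous step) and $\|y^+_n(T)\|_{L^2(\Omega)}$ (by Theorem 2.2) are uniformly bounded, the estimate produces a uniform bound on $p^+_n$ in $L^r([0,T],W^{1,p}(\Omega))$; because $[0,T]$ and $\Omega$ are bounded, the exponents can be chosen with $r\ge 1$ and $1<s\le p$, so the embedding $L^r([0,T],W^{1,p}(\Omega))\hookrightarrow L^1([0,T],W^{1,s}(\Omega))$ delivers the claimed bound. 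Summing the two uniform estimates proves the lemma.

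The step I expect to be the main obstacle is the duality identity in the second paragraph. Because the adjoint carries only the low regularity $L^1([0,T],W^{1,s}(\Omega))$, the pairing of $\mu^+_n$ against $w$ and the integration by parts must be justified with care. For each fixed $n$ this is legitimate, since $\mu^+_n\in L^2(\Omega_T)$ makes $p^+_n$ a genuine weak solution, but the delicate point is to carry out the computation so that every constant stays independent of $n$; one must also verify that the elliptic bilinear forms cancel exactly and that only the terminal term $(\hat y(T)-y^+_n(T),\,y^+_n(T)-y_d)_\Omega$ survives. The remaining ingredients — the Slater-gap inequality, the use of the variational inequalities, and the appeals to Theorems 2.2 and 2.5 — are routine.
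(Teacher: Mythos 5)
Your proof is correct and follows essentially the same route as the paper's: the Slater-gap estimate $\sigma\|\mu^+_n\|_{L^1(\Omega_T)}\le\int_0^T(\mu^+_n,\psi-y^+_n)_\Omega\,dt+\int_0^T(\mu^+_n,y^+_n-\hat y)_\Omega\,dt$, with the first term bounded via Lemma 3.7 and the cross term handled by state--adjoint duality plus the variational inequalities, followed by the embedding $L^1(\Omega_T)\hookrightarrow\mathcal{M}(\overline{\Omega}_T)$ and Theorem 2.5 for the bound on $p^+_n$. If anything, your duality identity is more careful than the paper's computation \eqref{23}, which silently drops the boundary contribution $\int_0^T(p^+_n,v^+_n-\hat v)_\Sigma\,dt$ that you retain and then control with the boundary variational inequality.
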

\begin{proof}
First let us prove that $\mu^+_n$ is bounded in $L^1(\Omega_T)$. Let $(\hat{y},\hat{u},\hat{v})$ fulfilled the Slater conditions given by Assumption 2, i.e., there exists $\sigma>0$, such that $\psi-\hat{y} >\sigma$. We can get the following estimate\\
		        			\begin{equation}
		\begin{aligned}
	&\sigma\parallel \mu^+_n\parallel_{L^1(\Omega_T)}  =\int^T_0\int_{\Omega}\mu^+_n\sigma dxdt\leq\int^T_0\int_{\Omega} \mu^+_n(\psi-\hat{y})dxdt\\  &\leq\int^T_0\int_{\Omega}\mu^+_n(\psi-y^+_n)dxdt+\int^T_0\int_{\Omega}\mu^+_n (y^+_n-\hat{y}) dxdt.
			                  	\end{aligned}
	\label{21}
\end{equation}\\
The first part of \eqref{21} can be estimated with Lemma 3.7 yielding\\
		        			\begin{equation}
	\begin{aligned}
	&\int^T_0\int_{\Omega}\mu^+_n(\psi-y^+_n)dxdt \leq |\int^T_0(\mu^+_n,\psi-y^+_n)_{\Omega}dt|\\
	&\leq \tau^{n-1}(\parallel(y^+_1-\psi)_+\parallel_{C([0,T],\overline{\Omega})}+\parallel \mu^+_1\parallel_{L^2(\Omega_T)}\parallel (y^+_n-\psi)_+\parallel_{L^2(\Omega_T)})\\
	&=:C.\\
				                  	\end{aligned}
	\label{22}
\end{equation}\\
The second part of \eqref{21} can be estimated as following\\
		        			\begin{equation}
	\begin{aligned}
	&\int^T_0\int_{\Omega}\mu^+_n (y^+_n-\hat{y}) dxdt=\int^T_0\int_{\Omega}(-(p^+_n)_t+Ap^+_n) (y^+_n-\hat{y}) dxdt\\
	&= \int^T_0 (-(p^+_n)_t,y^+_n-\hat{y})_{\Omega}dt+\int^T_0 (p^+_n,A(y^+_n-\hat{y}))_{\Omega}dt\\
	&=\int^T_0 (-(p^+_n)_t,y^+_n-\hat{y})_{\Omega}dt+\int^T_0 (p^+_n,u^+_n-\hat{u})_{\Omega}dt\\
	&+\int^T_0 (p^+_n,-((y^+_n)_t-(\hat{y}))_t)_{\Omega}dt\\
	&=\int^T_0 (p^+_n,u^+_n-\hat{u})_{\Omega}dt -(p^+_n(T),y^+_n(T)-\hat{y}(T))_{\Omega}\\
	&\leq -\alpha \int^T_0 (u^+_n,u^+_n-\hat{u})_{\Omega}dt -(y^+_n(T)-y_d,y^+_n(T)-\hat{y}(T))_{\Omega} \\
	&\leq -\frac{\alpha}{2}\parallel u^+_n\parallel_{L^2(\Omega_T)} +\frac{\alpha}{2}\parallel \hat{u}\parallel_{L^2(\Omega_T)}\\
	&+\frac{1}{2}\parallel \hat{y}(T)-y^+_n(T)\parallel_{L^2(\Omega)}+\frac{1}{2}\parallel y^+_n(T)-y_d\parallel_{L^2(\Omega)}.
		\end{aligned}
	\label{23}
\end{equation}\\
Combine \eqref{21}, \eqref{21} and \eqref{23} together yield\\
\[
	\begin{aligned}
	&\parallel \mu^+_n\parallel_{L^1(\Omega_T)}+\frac{\alpha}{2\sigma}\parallel u^+_n\parallel_{L^2(\Omega_T)} \\
	&\leq \frac{C}{\sigma} +\frac{\alpha}{2\sigma}\parallel \hat{u}\parallel_{L^2(\Omega_T)}+\frac{1}{2\sigma}\parallel \hat{y}(T)-y^+_n(T)\parallel_{L^2(\Omega)}+\frac{1}{2\sigma}\parallel y^+_n(T)-y_d\parallel_{L^2(\Omega)}
\end{aligned}
\]
Since the right-hand side of the inequality is bounded, it is possible to obtain the boundedness of $\mu^+_n$ under $L_1$ norm. And we have the embedding $L^1(\Omega_T)\hookrightarrow M(\Omega_T)$, combined with Theorem 2.5, the boundedness of $\parallel p^+_n\parallel_{L^1([0,T],W^{1,s}(\Omega))}$ holds.
\end{proof}
\begin{theorem}
Let sub-sequences $(p^+_{n_j},\mu^+_{n_j})$ of $(p^+_n,\mu^+_n)$ be given such that $\mu^+_{n_j} \rightharpoonup^* \tilde{\mu}$ in $\mathcal{M}(\bar{\Omega}_T)$ and $p^+_{n_j} \rightharpoonup \tilde{p}$ in $L^1([0,T],W^{1,s}(\Omega)),\; s \in (1,N/N-1)$. Then $(\tilde{\mu},\tilde{p})$ satisfies necessary condition \eqref{3} of the original problem \eqref{P}. If $(\bar{p},\bar{\mu})$ are uniquely determined Lagrange multipliers. Then $\mu^+_{n} \rightharpoonup^* \bar{\mu}$ in $\mathcal{M}(\bar{\Omega}_T)$ and $p^+_{n} \rightharpoonup^* \bar{p}$ in $L^1([0,T],W^{1,s}(\Omega))$.
\end{theorem}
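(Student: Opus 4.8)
The plan is to pass to the limit along the given subsequence in the first-order system \eqref{5} that every successful iterate $(y^+_n,u^+_n,v^+_n,p^+_n,\mu^+_n)$ satisfies, thereby showing that $(\tilde\mu,\tilde p)$ solves the optimality system \eqref{3}, and then to upgrade subsequential convergence to convergence of the full sequences by a uniqueness argument. First I would invoke Theorem 3.6 to record the strong primal convergences $u^+_n\to\bar u$ in $L^r(\Omega_T)$, $v^+_n\to\bar v$ in $L^s(\Sigma_T)$, and $y^+_n\to\bar y$ in $\mathcal{W}(0,T;L^2(\Omega),H^1(\Omega))\cap C(\overline\Omega_T)$. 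With these, passing to the limit in the linear state equation is immediate and recovers the first three lines of \eqref{3}. The terminal condition $p^+_n(\cdot,T)=y^+_n(\cdot,T)-y_d$ passes to the limit because $y^+_n(\cdot,T)\to\bar y(\cdot,T)$ in $L^2(\Omega)$, and the two variational inequalities pass to the limit by combining the weak convergence $p^+_{n_j}\rightharpoonup\tilde p$ with the strong convergence of $u^+_{n_j},v^+_{n_j}$ (a weak--strong pairing), yielding the inequalities for $(\tilde p,\bar u,\bar v)$.

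The crux of the argument is the adjoint equation, and I expect this to be the main obstacle. Each $p^+_n$ solves the backward problem with a purely interior source $\mu^+_n\in L^1(\Omega_T)$ and homogeneous co-normal condition, yet the weak-$*$ limit $\tilde\mu\in\mathcal{M}(\overline\Omega_T)$ of the interior densities may concentrate mass on the lateral boundary $\Sigma_T$. The plan is to test the weak form of the adjoint equation against $q\in C^\infty(\overline\Omega_T)$ that need not vanish on $\Gamma$: the source term $\int_0^T(\mu^+_{n_j},q)_\Omega\,dt=\langle\mu^+_{n_j},q\rangle\to\langle\tilde\mu,q\rangle$, while the terms involving $p^+_{n_j}$ pass to the limit by weak convergence. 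Separating the interior and boundary contributions by integration by parts should then produce exactly the decomposition $\tilde\mu=\mu_{\Omega_T}+\mu_{\Sigma_T}$ with $-\tilde p_t+A\tilde p=\mu_{\Omega_T}$ in $\Omega_T$ and $\partial_{\nu_A}\tilde p=\mu_{\Sigma_T}$ on $\Sigma_T$. The delicate point is justifying that the limiting measure genuinely splits into a volume and a boundary part and that the boundary term appearing in the weak formulation is precisely the co-normal derivative of $\tilde p$; controlling this low-regularity pairing relies on the a priori estimate of Theorem 2.5.

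For the sign and complementarity conditions I would argue as follows. Since each $\mu^+_n\ge 0$ almost everywhere, testing against nonnegative $\phi\in C(\overline\Omega_T)$ gives $\langle\tilde\mu,\phi\rangle=\lim\langle\mu^+_{n_j},\phi\rangle\ge 0$, so $\tilde\mu\ge 0$. The residual $R^+_n\to 0$ (see \eqref{19}) forces $\|(y^+_n-\psi)_+\|_{C(\overline\Omega_T)}\to 0$, whence $\bar y\le\psi$ on $\overline\Omega_T$, and also $\int_0^T(\mu^+_n,\psi-y^+_n)_\Omega\,dt\to 0$. Writing this integral as the duality pairing $\langle\mu^+_n,\psi-y^+_n\rangle$ and using $\mu^+_{n_j}\rightharpoonup^*\tilde\mu$ together with $\psi-y^+_{n_j}\to\psi-\bar y$ strongly in $C(\overline\Omega_T)$, the weak--strong pairing gives $\langle\tilde\mu,\psi-\bar y\rangle=0$, i.e. $\langle\tilde\mu,\bar y-\psi\rangle=0$. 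This is the last line of \eqref{3}, so $(\tilde\mu,\tilde p)$ is a Lagrange multiplier pair for \eqref{P}.

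Finally, to deduce convergence of the whole sequences under uniqueness of $(\bar p,\bar\mu)$ I would use a subsequence argument. By Lemma 3.8, $\mu^+_n$ is bounded in $L^1(\Omega_T)\hookrightarrow\mathcal{M}(\overline\Omega_T)$, so by Banach--Alaoglu (with $C(\overline\Omega_T)$ separable) every subsequence admits a weak-$*$ convergent sub-subsequence; correspondingly, the bound of Theorem 2.5 confines $p^+_n$ to a bounded subset of the reflexive space $L^r(0,T;W^{1,p}(\Omega))$ with $r,p>1$, from which a weakly convergent sub-subsequence is extracted. By the first part, every such limit is a Lagrange multiplier pair and hence coincides with $(\bar\mu,\bar p)$ by uniqueness. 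Since an arbitrary subsequence therefore has a sub-subsequence converging to the same limit, the full sequences satisfy $\mu^+_n\rightharpoonup^*\bar\mu$ in $\mathcal{M}(\overline\Omega_T)$ and $p^+_n\rightharpoonup\bar p$ in $L^1(0,T;W^{1,s}(\Omega))$, which completes the proof.
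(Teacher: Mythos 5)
Your proposal follows the same core route as the paper's proof: use the bounds of Lemma 3.8 to extract weak-$*$ convergent subsequences, get $\tilde{\mu}\geq 0$ from the pointwise nonnegativity of the multiplier update, get the complementarity $\langle\tilde{\mu},\bar{y}-\psi\rangle=0$ by pairing $\mu^+_{n_j}\rightharpoonup^*\tilde{\mu}$ against the strong convergence $y^+_{n_j}\to\bar{y}$ in $C(\overline{\Omega}_T)$ together with the vanishing residual (Lemma 3.7), and pass to the limit distributionally in the adjoint equation. The differences are ones of coverage, and they favor you. The paper's proof never addresses the theorem's second assertion (convergence of the \emph{whole} sequences when $(\bar{p},\bar{\mu})$ is unique); your closing subsequence/sub-subsequence argument is precisely what is needed there. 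You also pass to the limit in the complete system \eqref{5} --- state equation, terminal condition, and the two variational inequalities --- whereas the paper treats only the sign condition, the complementarity, and the adjoint equation. Most importantly, you explicitly flag the genuinely delicate step that the paper's one-line limit $\int_{\Omega_T}[-(p^+_{n_j})_t+Ap^+_{n_j}]\phi\,dx\,dt\to\langle\tilde{\mu},\phi\rangle$ silently skips: the interior densities $\mu^+_{n_j}\in L^1(\Omega_T)$ can concentrate mass on $\Sigma_T$ in the weak-$*$ limit, so the splitting $\tilde{\mu}=\mu_{\Omega_T}+\mu_{\Sigma_T}$ and the identification of the boundary part with $\partial_{\nu_A}\tilde{p}$ require justification; neither you nor the paper carries this out in detail, so treat it as the one step in both arguments that is a plan rather than a proof. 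A final caution on your variational-inequality passage: weak convergence of $p^+_{n_j}$ in $L^1([0,T],W^{1,s}(\Omega))$ paired with strong convergence of $u^+_{n_j}$ in $L^r(\Omega_T)$ does not by itself give convergence of the product, since the admissible bounds $u_a,u_b$ lie only in $L^r$ and not in $L^\infty$; to make that weak--strong pairing rigorous you should invoke the improved integrability of the adjoint states from Theorem 2.5 (or uniform integrability of the products), and this should be stated explicitly.
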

\begin{proof}
This proof follows the methodology outlined in \cite{Karl2018A,Hinze2010V}. By Lemma 3.8, the sequences $\mu^+_{n}$ and $p^+_{n}$ are bounded. Consequently, we can extract sub-sequences such that  
\[
\mu^+_{n_j} \rightharpoonup^* \tilde{\mu} \quad \text{and} \quad p^+_{n_j} \rightharpoonup^* \tilde{p},
\]  
where the weak-* convergence of $\mu^+_{n_j}$ and  $p^+_{n_j}$ are understood in their respective function spaces. Our objective is to demonstrate that the limits $\tilde{\mu}$ and $\tilde{p}$ satisfy the conditions specified in \eqref{3}.  \\
We begin by showing that $\tilde{\mu}$ satisfies the constraints in \eqref{3}. Specifically, it is necessary to verify that  
\[
\langle \tilde{\mu}, \phi \rangle \geq 0 \quad \forall \phi \in C(\overline{\Omega}_T), \, \phi \geq 0,
\]  
and  
\[
\langle \tilde{\mu}, \bar{y} - \psi \rangle = 0.
\]  

From the Lagrange multiplier update rule in Algorithm 2, we know that $\mu^+_{n_j} \geq 0$, which implies\\
    	\begin{equation}
    	\begin{aligned}
    \int_{\Omega_T}\mu^+_{n_j}\phi dxdt\rightarrow\langle\tilde{\mu},\phi\rangle\geq0 \;,\forall\phi \in C(\overline{\Omega}_T), \quad \phi \geq0.
    	\end{aligned}
    \label{24}
\end{equation}\\
    And then because of $y^+_{n_j}\rightarrow\bar{y}$ in $C(\overline{\Omega}_T)$ and lemma 3.7, we obtain\\
    $$\lim\limits_{j\rightarrow\infty}|\int^T_0(\mu^+_{n_j},\psi-y^+_{n_j})dt|=|\langle\tilde{\mu},\bar{y}-\psi\rangle|=0.$$
Next, we proceed to prove that $\tilde{p}$ satisfies \eqref{3}. Since the embedding  
\[
C(\overline{\Omega}_T) \hookrightarrow L^\infty([0,T]; W^{-1,s'}), \quad \text{where } \frac{1}{s} + \frac{1}{s'} = 1,
\]  
is continuous, and \(L^\infty([0,T]; W^{-1,s'})\) is the dual space of \(L^1([0,T], W^{1,s}(\Omega))\), we will continue to use \(\phi\) as the test function. Additionally, leveraging \eqref{24}, we obtain
    \begin{align*}
    &\int_{\Omega_T}\mu^+_{n_j}\phi dxdt=\int_{\Omega_T}[-(p^+_{n_j})_t+Ap^+_{n_j}]\phi dxdt\\
   & \rightarrow\int_{\Omega_T}[-\tilde{p}_t+A\tilde{p}]\phi dxdt=\langle\tilde{\mu},\phi\rangle.
\end{align*}
Since $\tilde{\mu}$ satisfies \eqref{3}, $\tilde{p}$ also satisfies \eqref{3}.
\end{proof}
    \section{Optimization of sub-problems}
     In the first step of each iteration of Algorithm 2, the optimal solution of the \( k \)-th iteration sub-problem needs to be determined. This will be achieved using the Method of Successive Approximations (MSA), which is based on Pontryagin's Principle. Below, Pontryagin's Principle is presented under the constraints of the parabolic equations \cite{Raymond1999H}.
    \subsection{Pontryagin's Maximum Principle}
    First, the expression for the Hamiltonian of the \( k \)-th iteration subproblem is provided, including the distributed Hamiltonian function and the boundary Hamiltonian function, as follows:\\
    $$H_{\Omega_T}(y_k,u_k,p_k,\mu_k,\rho_k):=\frac{\alpha}{2}u_k^2+\frac{1}{2\rho_k}((\rho_k(y_k-\psi)+\mu_k)_+^2-\mu_k^2)+p_ku_k,$$
    for every $(x,t,y_k,u_k,p_k)\in \Omega \times (0,T)\times R\times R\times R,$\\
    $$H_{\Sigma_T}(y_k,v_k,p_k):=\frac{\beta}{2}v_k^2+p_kv_k,$$
    for every $(x,t,v_k,p_k)\in \Sigma \times (0,T)\times R\times R$.\\
    The following theorem establishes Pontryagin's Principle for optimal control problems subject to constraints imposed by parabolic equations.
    \begin{theorem}
Let $(\bar{y}_k,\bar{u}_k,\bar{v}_k)$ be the solution of $(P_k)$. Then there exists a unique adjoint state $\bar{p}_k \in L^2(0,T;H^{-1}(\Omega))$, satisfying the following equation.\\
\begin{align*}
	-(\bar{p}_\rho)_t + A\bar{p}_\rho &= \bar{\mu}_\rho,  & \rm{in} \; \Omega_T,\\
	\partial_{\nu_A}\bar{p}_\rho &= 0, & \rm{on}\; \Sigma_T,\\
	\bar{p}_\rho(\cdot,T) &= \bar{y}_\rho(\cdot,T) - y_d, & \rm{in} \;\;\;\overline{\Omega}.
\end{align*}
and such that\\
\begin{align*}
	&H_{\Omega_T}(\bar{y}_k,\bar{u}_k,\bar{p}_k,\mu_k,\rho_k)=\lim\limits_{u\in U_{ad}}H_{\Omega_T}(\bar{y}_k,u,\bar{p}_k,\mu_k,\rho_k),\\
	&\rm{for \; a.e.}  \; (x,t)\in \Omega_T,\\
		&H_{\Sigma_T}(\bar{y}_k,\bar{v}_k,\bar{p}_k,\mu_k,\rho_k)=\lim\limits_{v\in V_{ad}}H_{\Omega_T}(\bar{y}_k,v,\bar{p}_k,\mu_k,\rho_k),\\
	&\rm{for \; a.e.} \; (x,t)\in \Sigma_T.
\end{align*}
    \end{theorem}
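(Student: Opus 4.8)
The plan is to derive the pointwise minimum principle from the integral variational inequalities already supplied by Theorem~3.3 (the system \eqref{5}), rather than re-deriving optimality from scratch. First I would settle the existence and uniqueness of the adjoint state. Since $\bar{y}_k\in C(\overline{\Omega}_T)$ and $\mu_k\in L^2(\Omega_T)$, the penalized multiplier $\bar{\mu}_k=(\rho_k(\bar{y}_k-\psi)+\mu_k)_+$ lies in $L^2(\Omega_T)$; hence the backward parabolic problem with source $\bar{\mu}_k$, homogeneous co-normal data, and terminal value $\bar{y}_k(\cdot,T)-y_d\in L^2(\Omega)$ has a unique weak solution $\bar{p}_k$ by the standard well-posedness of linear parabolic equations. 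Because the data are $L^2$, this solution is in fact regular enough (lying in $\mathcal{W}(0,T;L^2(\Omega),H^1(\Omega))\hookrightarrow L^2(\Omega_T)$, with a well-defined $L^2$-trace on $\Sigma_T$) to make all the inner products and pointwise operations below meaningful. This disposes of the adjoint assertion, which is exactly the adjoint subsystem of \eqref{5}.

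Next I would record that optimality of $(\bar{y}_k,\bar{u}_k,\bar{v}_k)$ already yields, via \eqref{5}, the two gradient inequalities
\[
\int_0^T (\bar{p}_k+\alpha\bar{u}_k,\,u-\bar{u}_k)_\Omega\,dt\geq 0,\qquad \int_0^T (\bar{p}_k+\beta\bar{v}_k,\,v-\bar{v}_k)_\Sigma\,dt\geq 0,
\]
valid for all $u\in U_{ad}$ and $v\in V_{ad}$. The crucial structural observation is that the only $u$-dependent part of $H_{\Omega_T}$ is $\tfrac{\alpha}{2}u^2+\bar{p}_k u$, since the penalty term depends solely on the frozen state $\bar{y}_k$; similarly $H_{\Sigma_T}$ depends on $v$ only through $\tfrac{\beta}{2}v^2+\bar{p}_k v$. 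Thus pointwise minimization of $H_{\Omega_T}(\bar{y}_k,\cdot,\bar{p}_k,\mu_k,\rho_k)$ over $[u_a,u_b]$ reduces to minimizing the strictly convex quadratic $q(u)=\tfrac{\alpha}{2}u^2+\bar{p}_k u$, whose unique minimizer over the box is the projection $\mathcal{P}_{[u_a,u_b]}(-\bar{p}_k/\alpha)$, and the analogous statement holds for $v$.

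The heart of the argument, and the step I expect to be the main obstacle, is the passage from the integral inequality to the pointwise minimum principle. Here I would exploit that $U_{ad}$ is a box defined by pointwise a.e.\ bounds, so that admissible competitors can be localized: for any measurable $E\subset\Omega_T$ one may replace $\bar{u}_k$ by another admissible control only on $E$ while staying in $U_{ad}$. Letting $E$ shrink to a Lebesgue point and using the arbitrariness of $E$ converts the integral inequality into
\[
\big(\bar{p}_k(x,t)+\alpha\bar{u}_k(x,t)\big)\big(u-\bar{u}_k(x,t)\big)\geq 0
\]
for every admissible value $u$ and a.e.\ $(x,t)\in\Omega_T$, which is precisely the first-order condition characterizing the minimizer of $q$ on $[u_a,u_b]$; hence $\bar{u}_k(x,t)$ minimizes $H_{\Omega_T}$ for a.e.\ $(x,t)$. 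The boundary assertion follows verbatim with $\Omega_T,u,U_{ad},\alpha$ replaced by $\Sigma_T,v,V_{ad},\beta$ and the surface measure used in the localization, relying on the $L^2$-trace of $\bar{p}_k$ on $\Sigma_T$. The only care needed is the measurability of the pointwise minimizer and the existence of Lebesgue points in the parabolic cylinder, both of which are routine once $\bar{p}_k$ and $\bar{u}_k$ are known to be measurable and square-integrable.
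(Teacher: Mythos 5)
Your proposal is correct, but it takes a genuinely different route from the paper: the paper gives no argument at all for this theorem, disposing of it with a citation to Raymond's general Pontryagin principle for parabolic problems, whereas you derive the pointwise minimum principle internally, from the first-order system \eqref{5} already established in Theorem 3.2. Your two key steps are sound: (i) since $\bar{y}_k,\psi\in C(\overline{\Omega}_T)$ and $\mu_k\in L^2(\Omega_T)$, the multiplier $\bar{\mu}_k=(\rho_k(\bar{y}_k-\psi)+\mu_k)_+$ is an $L^2$ source, so the backward problem is uniquely solvable by standard parabolic theory; and (ii) the control-dependent part of each Hamiltonian is the strictly convex quadratic $\tfrac{\alpha}{2}u^2+\bar{p}_k u$ (resp.\ $\tfrac{\beta}{2}v^2+\bar{p}_k v$), the penalty term being frozen at $\bar{y}_k$, so the pointwise variational inequality $(\bar{p}_k+\alpha\bar{u}_k)(w-\bar{u}_k)\geq 0$ for all $w\in[u_a(x,t),u_b(x,t)]$ is exactly the characterization of the pointwise minimizer. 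The passage from the integral inequality in \eqref{5} to this pointwise form via localization over measurable sets (plus a countable-density argument to get a competitor-independent null set) is the standard technique and works because $U_{ad}$ and $V_{ad}$ are defined by pointwise box constraints; your parenthetical corrections of the statement's typographical slips ($\lim$ should be $\min$, the boundary Hamiltonian should be $H_{\Sigma_T}$, and the $\alpha$ in the boundary inequality of \eqref{5} should be $\beta$) are also appropriate. What your approach buys is a self-contained, elementary proof tied to the quantities the algorithm actually computes; what it costs is generality: the equivalence of the first-order variational inequality with the pointwise minimum principle holds only because the Hamiltonian is convex in the control, so your argument would not extend to a nonquadratic cost or a state equation nonlinear in the control, which is precisely the setting the cited general result (proved via needle-variation techniques) is built to handle.
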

    \begin{proof}
    	See \cite{Raymond1999H} for the proof.
    \end{proof}
    \subsection{Method of Successive Approximations}
With the theoretical foundation provided by Pontryagin's Principle, the Method of Successive Approximations (MSA), originally proposed by Chernousko and Lyubushin in 1982 \cite{Chernous1982M}, can be introduced. This iterative method alternates between propagation and optimization steps. The following algorithm applies MSA to solve the optimal sub-problem in the \( k \)-th iteration.
\\
\\
\\
\begin{algorithm}
	\caption{ }
	\begin{algorithmic}[1]
    	 	\State Initialize $\rho_k > 0$, $\mu_k \in L^2(\Omega_T)$ with $\mu_k \geq 0$, and $(u_{k,1}, v_{k,1}) \in L^r(\Omega_T) \times L^s(\Sigma_T)$, set $i = 1$. 
    	 	 \Repeat 
    	 	 \State Solve $(y_{k,i})_t + Ay_{k,i} = u_{k,i}$, $\partial_{\nu_A}y_{k,i} = v_{k,i}$, $y_{k,i}(\cdot,0) = y_0$,
    	 	 \State Set $\mu_{k,i} := (\rho(y_{k,i} - \psi) + \mu_{k})_+$, 
    	 	 \State Solve $-(p_{k,i})_t + Ap_{k,i} = \mu_{k,i}$, $\partial_{\nu_A}p_{k,i} = 0$, $p_{k,i}(\cdot,T) = y_{k,i}(\cdot,T) - y_d$,
\State Set 
\[
\begin{aligned}
	u_{k,i+1} &= \arg\min_{u \in U_{ad}} H_{\Omega_T}(x,t,y_{k,i},u,p_{k,i}), \\
	v_{k,i+1} &= \arg\min_{v \in V_{ad}} H_{\Sigma_T}(x,t,v,p_{k,i})
\end{aligned}
\]
         for each $(x,t) \in \overline{\Omega}_T$.
    	 	  \State $i := i + 1$. 
    	 	  \Until{termination criterion is met.} 
     	  \end{algorithmic} 
       \end{algorithm}\\
   To minimize the Hamiltonian, gradient descent is utilized to perform updates. The gradients of the distributed Hamiltonian function and the boundary Hamiltonian function with respect to the internal control \( u \) and the boundary control \( v \) are expressed as follows:\\
   $$\nabla_uH_{\Omega_T}=\alpha u_k +p_k+p_k(\rho_k(y_k-\psi)+\mu_k)_+,$$
   $$\nabla_vH_{\Sigma_T}=\beta v_k+p_k.$$  
   
     \section{Numerical tests}
     In this section, we present numerical results for a pointwise state-constrained optimal control problem of a parabolic equation, where $\Omega$ is a two-dimensional domain. The original problem \eqref{P} is solved using the augmented Lagrange method described in Algorithm 2, while the sub-problem $(P_k)$ is addressed via the MSA outlined in Algorithm 3. The MSA was terminated when\\
     $$|u_{k,i+1}-u_{k,i}|\leqslant \epsilon_1$$
     was satisfied, i.e., the gap in the control variable is small enough.\\
     The augmented Lagrange algorithm was stopped when\\
     $$R^+_n=\lim_{k\rightarrow \infty}(\parallel(y^+_n-\psi)_+\parallel_{C(\overline{\Omega}_T)}+|\int^T_0(\mu^+_{n},\psi-y^+_n)dt|)\leq \epsilon_2.$$
     We consider an optimal control problem with $\Omega_T=[0,1]\times[0,1]\times[0,1]$ given by\\
     \begin{align*}
     	\min J(y,u): &=\frac{1}{2}\parallel y(\cdot,T)-y_d\parallel_{L^2(\Omega)}^2+\frac{\alpha}{2}\parallel u\parallel_{L^2(\Omega_T)}^2
     	    \end{align*}
     
    \[
\text{s.t.}
    \begin{cases}
    	y_t - \Delta y = u & \text{in } \Omega_T, \\
    	\partial_{\nu_A} y = 0 & \text{on } \Sigma_T, \\
    	y(\cdot, 0) = y_0 & \text{in } \overline{\Omega}, \\
    	y(x,t) \leq \psi(x,t) & \text{in } \Omega_T, \\
    	u_a(x,t) \leq u(x,t) \leq u_b(x,t) & \text{in } \Omega_T.
    \end{cases}
    \]
    We choose $y_0=sin(\pi x)sin(\pi y)$, $y_d=\exp(-2\alpha\pi T)sin(\pi x)sin(\pi y)$, $\psi=1$ , $\alpha=1 $ , $u_a=-1$ , $u_b=1$ , $\tau=0.9$ , $\gamma=2$. The problem has A theoretical analytic solution $\bar{y}=\exp(-2\alpha\pi t)sin(\pi x)sin(\pi y)$. We make the initialization control $u_0=0$, to make the descent of $R^+_n$ more intuitive, we set $\mu_0=10$. In solving the parabolic equation of state and the dual parabolic equation, we use the finite difference method, we set the step size of time and space in each direction to $0.25$. We adjust the learning rate of the sub-problem gradient descent to the dynamic learning rate, which is initially $0.001$ and multiplied by $0.9$ every 100 iterations. At the same time we set $\epsilon_1$ and $\epsilon_2$ of the termination condition to $10^{-4}$. The results are shown below.\\
    \begin{figure}[ht]
    	\centering
    	\includegraphics[width=0.6\textwidth]{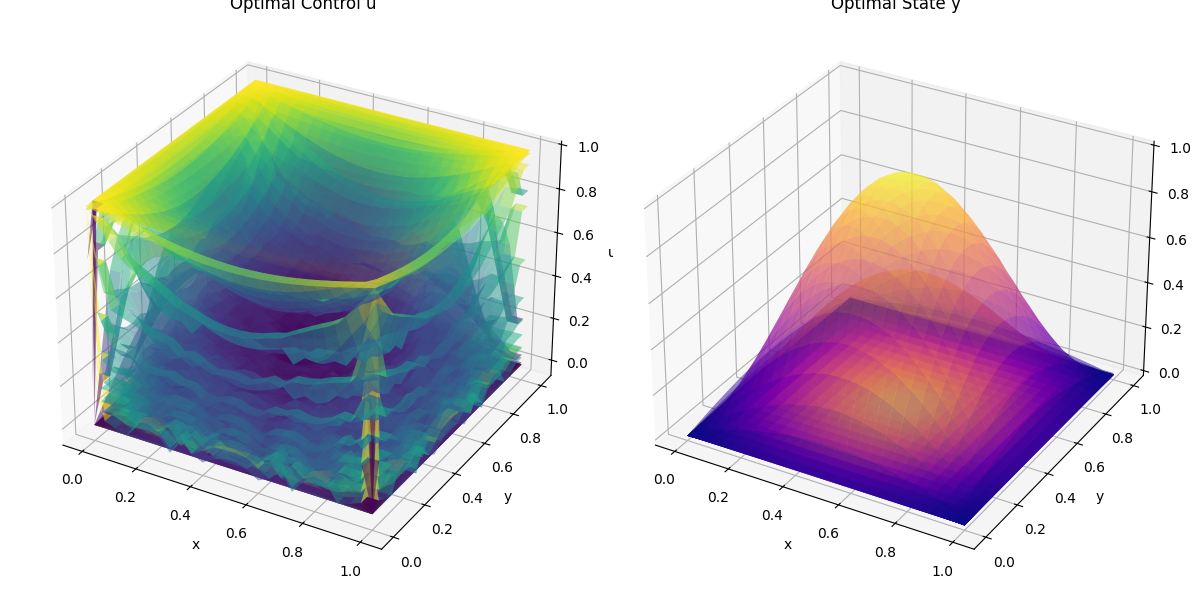} 
    	\caption{Computed discrete optimal state y (right) and optimal control u (left)} 
    	\label{fg1} 
    \end{figure}
        \begin{figure}[ht]
    	\centering
    	\includegraphics[width=0.6\textwidth]{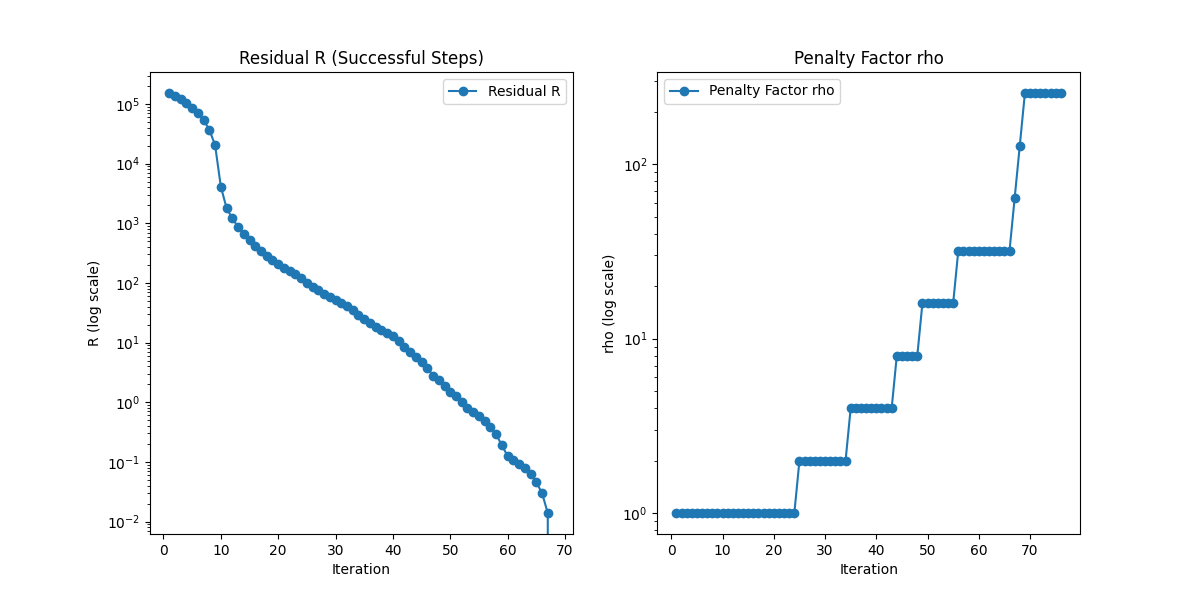} 
    	\caption{The number of successes n, the corresponding index $R^+_n$ (left), and the growth trend of penalty factor $\rho_k$ (right)} 
    	\label{fg2}
    \end{figure}
            \begin{figure}[ht]
    	\centering
    	\includegraphics[width=1\textwidth]{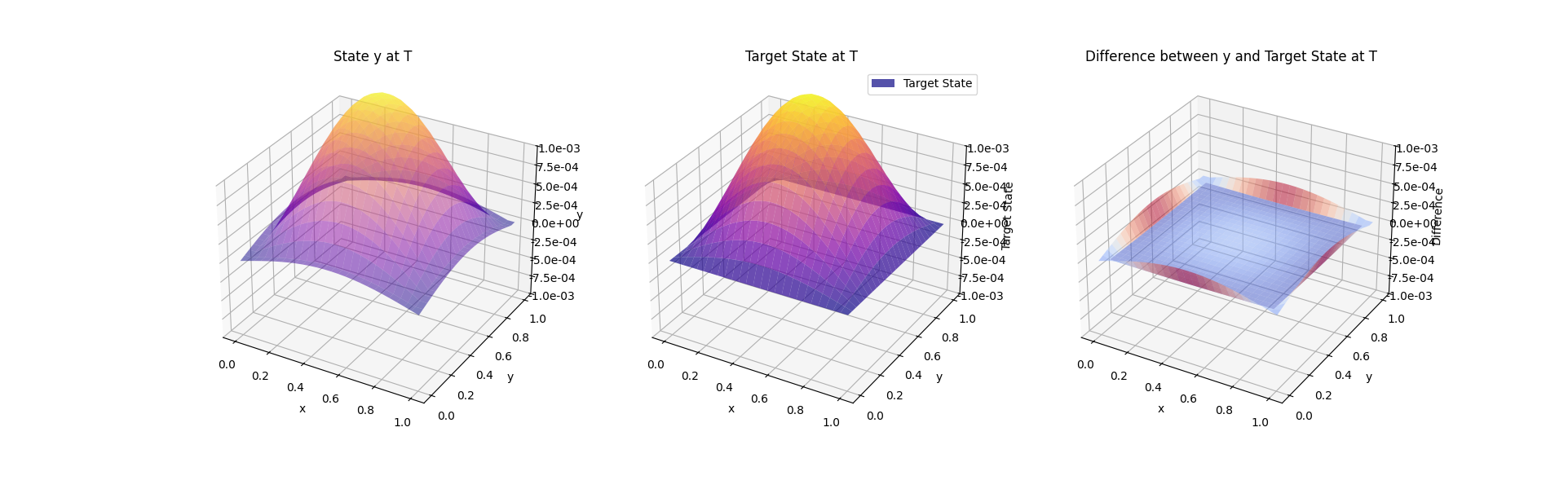} 
    	\caption{Computed discrete optimal state at time $T$ (left), Theoretical optimal state at time $T$ (center), Difference between computed and theoretical optimal state at time $T$ (right)} 
    	\label{fg3}
    \end{figure}
    \\
    From \ref{fg2}, we observe that $R^+_n$ approaches 0, demonstrating that the convergence performance of this algorithm is guaranteed. Additionally, from \ref{fg3}, we can see that the optimal state obtained by the algorithm is very close to the theoretical value of the optimal state, which further highlights the effectiveness of the algorithm.

     \section{Conclusion}
     In this paper, we apply the augmented Lagrange method in conjunction with the Method of Successive Approximations (MSA) to solve optimal control problems with state constraints and parabolic equation constraints. We introduce a convergence index to rigorously establish the convergence of the augmented Lagrange algorithm. The method presented in this work offers an effective alternative to the widely used regularization techniques, providing a more reliable and theoretically sound approach for solving such constrained optimal control problems.
\bibliographystyle{plain}
\bibliography{PALMH}

\end{document}